\newtheorem{theorem}{Theorem}[section]
\newtheorem{proposition}{Proposition}[section]
\newtheorem{lemma}{Lemma}[section]
\newtheorem{definition}{Definition}[section]
\newtheorem{remark}{Remark}[section]
\newtheorem{notation}{Notation}[section]
\newcommand{\RR}{\mathbb R}
\newcommand{\del}{\partial}
\newcommand{\e}{\varepsilon}
\newcommand{\Ric}{\mathrm{Ric}}
\newcommand{\calB}{{\mathcal B}}
\newcommand{\calC}{{\mathcal C}}
\newcommand{\calE}{{\mathcal E}}
\newcommand{\calH}{{\mathcal H}}
\newcommand{\calK}{{\mathcal K}}
\newcommand{\calO}{{\mathcal O}}
\newcommand{\calR}{{\mathcal R}}
\newcommand{\calS}{{\mathcal S}}
\newcommand{\frakH}{{\mathfrak H}}
\newcommand{\tr}{{\mathrm{tr}}\,}
\newcommand{\codim}{\operatorname{codim}}
\title{Higher codimension isoperimetric problems}
\author{Rafe Mazzeo}
\thanks{R.M. Supported by NSF-DMS-1105050}
\address{Stanford University}
\email{mazzeo@math.stanford.edu}
\author{Frank Pacard}
\address{Centre de Math\'ematiques Laurent Schwartz, \'Ecole Polytechnique-CNRS}
\email{frank.pacard@math.polytechnique.fr}
\author{Tatiana Zolotareva}
\address{Centre de Math\'ematiques Laurent Schwartz, \'Ecole Polytechnique-CNRS}
\email{zolotareva@math.polytechnique.fr}
\begin{document}

\maketitle

\begin{abstract}
We consider a variational problem for submanifolds $Q \subset M$ with nonempty boundary $\del Q = K$.  We propose the definition that the boundary $K$ of any critical point $Q$ have constant mean curvature, which seems to be a new perspective when $\dim Q < \dim M$. We then construct small nearly-spherical solutions of this higher codimension CMC problem; these concentrate near the critical points of a certain curvature function. 
\end{abstract} 

\section{Introduction} 

\let\thefootnote\relax\footnotetext{F. Pacard and T. Zolotareva are partially supported by the ANR-2011-IS01-002 grant. }
\let\thefootnote\relax\footnotetext{T. Zolotareva is partially supported by FMJH foundation.}

Constant mean curvature (CMC) hypersurfaces are critical points of the area functional subject to a volume constraint.  Examples include sufficiently smooth solutions to the isoperimetric problem. If $K$ is an embedded submanifold in a Riemannian manifold $(M^{m+1},g)$, then its mean curvature vector $H_K$ is the trace of its shape operator. When $K$ is a hypersurface, then we say that $K$ has CMC if this vector  has constant length, and this is the only sensible definition in this case. However, when $\codim K > 1$, it is less obvious how to formulate the CMC condition, since there is more than one way one might regard the mean curvature vector as being constant. One definition that has perhaps received the most attention is to require that $H_K$ be parallel. This is quite restrictive, and for that reason, not very satisfactory.

We propose here a different, and directly variational, definition. Building on ideas of Almgren \cite{A}, and extending one standard characterization of CMC hypersurfaces, we define constant mean curvature submanifolds to be boundaries of submanifolds which are critical for a certain energy functional. Roughly speaking, we say that $K$ has constant mean curvature if $K = \del Q$ where $Q$ is minimal, $K$ has CMC in $Q$, and $H_K$ has no 
component orthogonal to $Q$. 

The goal of this paper is to show that generic metrics on any compact manifold admit `small' CMC submanifolds in this sense. The result proved here is a generalization of a well-known theorem by Ye \cite{Ye}, which constructs families of CMC hypersurfaces which are small perturbations of geodesic spheres centered at nondegenerate critical points of the scalar curvature function $\calR$. The more recent paper \cite{Pac-Xu} obtains such families of CMC hypersurfaces under general condition on the scalar curvature and in particular when it is constant; 
in that case, these hypersurfaces are centered near critical points of a different curvature invariant.  These various results illustrate 
the sense in which the metric must be generic: some scalar function of the curvature must have nondegenerate critical points.

Let us now introduce the relevant curvature function. For any $(k+1)$-dimensional subspace $\Pi_p \subset T_pM$, define 
the partial scalar curvature
\[
\mathcal R_{k+1} (\Pi_p): =  - \sum_{i,j=1}^{k+1} \langle R(E_i, E_j)E_i, E_j \rangle. 
\]
where $E_1, \ldots, E_{k+1}$ is any orthonormal basis for $\Pi_p$. Note that $\calR_{m+1}(T_pM)$ is the standard scalar curvature at $p$, while $\calR_2(\Pi_p)$ is twice the sectional curvature of the $2$-plane $\Pi_p$.  The Grassman bundle $G_{k+1}(TM)$ is the fibre bundle over $M$ with fibre at $p \in M$ the Grassmanian of all $(k+1)$-planes in $T_pM$. We regard $\calR_{k+1}$ as a smooth function on $G_{k+1}(M)$.

Denote by $\mathcal S^k_\varepsilon (\Pi_p)$ and $\mathcal B^{k+1}_\varepsilon (\Pi_p)$ the images of the sphere and ball of radius $\e$ in $\Pi_p$ under the exponential map $\exp_p$, $p \in M$.  We can now state our main result. 

\begin{theorem} \label{T1}
If $\Pi_p$ is a nondegenerate critical point of $\calR_{k+1}$, then for all $\e$ sufficiently small, there exists a CMC submanifold $K_\e(\Pi_p)$ which 
is a normal graph over $\mathcal S^k_\varepsilon (\tilde \Pi_{\tilde p})$ by some section with $\calC^{2,\alpha}$ norm bounded by $C \e^2$, and $\mathrm{dist}\,(\tilde \Pi_{\tilde p}, \Pi_p) \leq c\, \varepsilon$. 
\end{theorem}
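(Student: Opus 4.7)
The plan is to carry out a Lyapunov--Schmidt reduction in the parameter space $G_{k+1}(M)$. For each pair $(\tilde p,\tilde \Pi_{\tilde p})$ close to $(p,\Pi_p)$ and each small $\e>0$, I take as approximate solution the model disc $Q_0 = \mathcal B^{k+1}_\e(\tilde\Pi_{\tilde p})$ with boundary $K_0 = \mathcal S^k_\e(\tilde\Pi_{\tilde p})$, and search for an exact solution $(Q,K)$ in which $Q$ is a normal graph over $Q_0$ by a section $\phi$ of $N(Q_0\hookrightarrow M)$, and $K = \partial Q$ is a normal graph inside $Q$ over the corresponding boundary by a section $\psi$ of $N(K_0\hookrightarrow Q_0)$. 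The three conditions from the variational definition (i.e. $H_Q\equiv 0$ on $Q$; the scalar mean curvature of $K$ in $Q$ is constant; $H_K$ has no component normal to $Q$ along $K$) assemble into a nonlinear elliptic system $F_\e(\tilde p,\tilde\Pi;\phi,\psi)=0$ on suitable weighted H\"older spaces.

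After rescaling $x\mapsto \e x$, the model limits to a flat Euclidean disc of radius $1$ in $\RR^{m+1}$ bounded by a round sphere, and $DF_0$ becomes the coupled system of the flat Jacobi operator on $\mathcal B^{k+1}_1$ for $\phi$ and the Jacobi operator of the round CMC sphere $\mathcal S^k_1\subset\mathcal B^{k+1}_1$ for $\psi$, linked through the boundary trace and the orthogonality condition. I expect the kernel of $DF_0$ to be exactly $\dim G_{k+1}(M)$-dimensional and generated by the geometric motions of the model: $m+1$ translations of the basepoint and $(k+1)(m-k)$ rotations of the plane $\tilde\Pi$ in $T_{\tilde p}M$. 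Away from this kernel, uniform invertibility (in $\e$) of the linearized operator on the orthogonal complement should follow by a standard Fredholm argument, and the implicit function theorem then yields a unique smooth family $(\phi_\e(\tilde p,\tilde\Pi),\psi_\e(\tilde p,\tilde\Pi))$ of size $O(\e^2)$ solving the system modulo a finite-dimensional obstruction $\Phi_\e\colon G_{k+1}(M)\to T^*G_{k+1}(M)$.

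The substantive computation is to identify $\Phi_\e$ to leading order. Using the metric expansion in geodesic normal coordinates at $\tilde p$,
\[
g_{ij}(x) = \delta_{ij} - \tfrac{1}{3}R_{ikjl}x^k x^l + O(|x|^3),
\]
and substituting into the mean curvature and area forms of $Q_0$ and $K_0$, all the relevant second-moment integrals reduce by rotational symmetry to a single multiple of $\sum_{i,j=1}^{k+1}\langle R(E_i,E_j)E_i,E_j\rangle$, exactly as in Ye's hypersurface computation. One obtains
\[
\Phi_\e(\tilde p,\tilde \Pi) = c\,\e^2\,\nabla_{G_{k+1}(M)}\calR_{k+1}(\tilde\Pi_{\tilde p}) + O(\e^3),
\]
with $c\neq 0$. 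The nondegeneracy of $(p,\Pi_p)$ as a critical point of $\calR_{k+1}$ makes the Hessian of $\Phi_\e/\e^2$ invertible at $(p,\Pi_p)$, so a final implicit function argument on the finite-dimensional manifold $G_{k+1}(M)$ produces $(\tilde p_\e,\tilde\Pi_{\tilde p_\e})$ solving $\Phi_\e=0$ with $\dist((\tilde p_\e,\tilde\Pi_{\tilde p_\e}),(p,\Pi_p))=O(\e)$. Combining this with the already-solved transverse perturbation of size $O(\e^2)$ gives the desired $K_\e(\Pi_p)$.

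The main obstacle is the joint verification of the two structural claims used above: first, that the kernel of the coupled linearization $DF_0$ is \emph{exactly} $T_{(\tilde p,\tilde\Pi)}G_{k+1}(M)$, with no extra modes produced by the boundary coupling between $\phi$ and $\psi$ or by the third (orthogonality) condition; and second, that the leading $\e^2$-coefficient of $\Phi_\e$ is genuinely a nonzero multiple of the gradient of $\calR_{k+1}$ rather than of some other curvature invariant, with no cancellation between the contributions coming from the minimality of $Q$ and the CMC condition on $K$. The correct treatment of the orthogonality condition at the boundary, which is what couples the two scalar problems into a single variational system, is likely to be the most delicate point; once this bookkeeping is done, the remaining estimates are a direct adaptation of Ye's contraction-mapping scheme.
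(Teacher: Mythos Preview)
Your overall strategy is correct and is the same Lyapunov--Schmidt reduction as in the paper: solve the problem up to a finite-dimensional defect indexed by $G_{k+1}(TM)$, identify the kernel of the linearization at the flat model with $T_{\Pi_p}G_{k+1}(TM)$ (translations of $p$ and rotations of $\Pi_p$), and then use nondegeneracy of $\calR_{k+1}$ to kill the defect. The paper carries out exactly this kernel computation in \S2.5 and the transverse fixed-point in Proposition~\ref{expVol}.

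The genuine difference is in how the obstruction $\Phi_\e$ is identified. You propose to compute it directly, by projecting the mean-curvature error of the approximate solution onto the cokernel and extracting the $\e^2$ coefficient---this is the Ye route. The paper instead exploits the variational structure: since $(Q,K)$ is critical for the single functional $\calE_{h_0}(Q)=\mathrm{Vol}_k(\partial Q)-h_0\,\mathrm{Vol}_{k+1}(Q)$, the obstruction is \emph{automatically} the differential of the reduced energy $\calE_\e(\Pi_p)=\calE_{k/\e}(Q_\e(\Pi_p))$ on $G_{k+1}(TM)$ (Proposition~\ref{CritPoint}). One then only has to expand this scalar function, obtaining (Lemma~\ref{expEner})
\[
\frac{(k+1)\,\calE_\e(\Pi_p)}{\e^k\,\mathrm{Vol}(S^k)}=1-\frac{\e^2}{2(k+3)}\calR_{k+1}(\Pi_p)+O(\e^4),
\]
from which the claim follows by the implicit function theorem on $G_{k+1}(TM)$.

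This buys you precisely the two things you flag as obstacles. First, the boundary coupling between the minimality of $Q$, the CMC condition on $K$ in $Q$, and the orthogonality $[H_K]^\perp=0$ is handled for free: all three are components of $D\calE_{h_0}=0$, so criticality of the reduced energy forces all of them simultaneously, with no separate bookkeeping. Second, the question of whether the leading coefficient of $\Phi_\e$ is a nonzero multiple of $\nabla\calR_{k+1}$, with no cancellation among the three contributions, reduces to expanding a single volume integral rather than three coupled mean-curvature projections. Your direct route would work, but the variational shortcut is what makes the higher-codimension coupling tractable.
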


Our construction of CMC submanifolds generalizes the method introduced in \cite{Pac-Xu}, and can also be carried out in
certain cases when the partial scalar curvature has degenerate critical points, for example when $(M,g)$ is Einstein has or
constant partial scalar curvature. 

\begin{theorem} \label{T2}
There exists $ \e_0 > 0 $ and a smooth function 
$$ \Psi: G_{k+1}(TM) \times (0,\e_0) \longrightarrow \mathbb R, $$
defined in \eqref{defPsi} below, such that if $\e \in (0,\e_0)$ and $\Pi_p $ is a critical point of $\Psi(\cdot, \e) $, then there 
exists an embedded $k$-dimensional 
submanifold $K_\e(\Pi_p)$ with constant mean curvature equal to $k/\e$.  This submanifold is a normal 
graph over a geodesic sphere $\mathcal S^k_\e(\Pi_p)$ with respect to a vector field, the $\mathcal C^{2,\alpha}$ norm of 
which is bounded by $c\e^2$. 
\end{theorem}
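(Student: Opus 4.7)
The plan is a Lyapunov--Schmidt reduction in the spirit of \cite{Pac-Xu}, applied to the coupled problem implicit in the definition given in the introduction. For each parameter $\Pi_p \in G_{k+1}(TM)$, the totally geodesic ball $\calB^{k+1}_\e(\Pi_p)$ together with its boundary sphere $\calS^k_\e(\Pi_p)$ form an approximate solution: in the Euclidean tangent space the flat $(k+1)$-disk is minimal, its boundary sphere has constant mean curvature $k/\e$ in the disk, and the corresponding mean curvature vector is tangent to the disk, so the curvature of $g$ perturbs all three identities only by errors of size $O(\e^2)$. I would parametrize nearby candidate pairs $(Q,K)$ by a section $V$ of the normal bundle of $\calB^{k+1}_\e(\Pi_p)$ in $M$ (with $Q = \exp(V)$) and a section $W$ of the normal bundle of $\calS^k_\e(\Pi_p)$ in $Q$ (with $K = \exp_Q(W)$), so that the three conditions---minimality of $Q$, CMC of $K$ in $Q$, and tangency of $H_K$ to $Q$---become a nonlinear system $\calF(V,W;\Pi_p,\e) = 0$.

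Linearizing at $V=W=0$ and rescaling so that the sphere has unit radius, $D\calF|_{(0,0)}$ should degenerate to a decoupled limit: the Euclidean Jacobi operator of the flat $(k+1)$-disk paired with the linearized mean curvature operator of the round $k$-sphere, subject to appropriate matching across the boundary. The kernel $\calK$ of this limit should be spanned exactly by the infinitesimal Euclidean motions preserving the ball-sphere pair---translations of the center and rotations of the plane---and is naturally identified with $T_{\Pi_p} G_{k+1}(TM)$. Working in suitable weighted $\calC^{2,\alpha}$ spaces, $D\calF$ restricted to the $L^2$-complement $\calK^\perp$ should be uniformly invertible in $\e$, so the implicit function theorem produces, for each $\Pi_p$, a unique small pair $(V_\e(\Pi_p),W_\e(\Pi_p))$ solving the projected equation $P_{\calK^\perp}\calF = 0$, with $\|W_\e\|_{\calC^{2,\alpha}} \leq c\e^2$. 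I would then define $\Psi(\Pi_p,\e)$ to be the variational functional singled out by the CMC definition of this paper (schematically, area of $K$ minus $k/\e$ times a suitable volume attached to $Q$) evaluated on this reduced family. Because the reduction is variational, the remaining obstruction $P_\calK\calF$ equals, up to a uniformly invertible identification, the Grassmannian differential $d_{\Pi_p}\Psi(\cdot,\e)$, so the critical points of $\Psi$ are in bijection with genuine CMC submanifolds.

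The hard part will be the uniform $\e$-independent invertibility of $D\calF$ on $\calK^\perp$ together with the precise identification of $\calK$. A vector spherical harmonic decomposition on $\calS^k_\e$ coupled with matching conditions across $\del Q$ for the Jacobi problem on the disk should show that the only bounded kernel elements of the limit operator are the geometric ones coming from the Grassmannian action, giving a uniform spectral gap. The $O(\e^2)$ bound then reflects both that curvature corrections to the Euclidean model begin at this order and that any $O(\e)$ obstruction already lies in $\calK$, hence is absorbed into the definition of $\Psi$.
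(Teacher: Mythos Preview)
Your proposal is correct and follows essentially the same Lyapunov--Schmidt scheme as the paper, including the identification of the kernel $\calK$ with $T_{\Pi_p}G_{k+1}(TM)$ via translations and $\Pi_p$--$\Pi_p^\perp$ rotations, and the definition of $\Psi$ as the energy $\calE_{k/\e}$ evaluated on the reduced family.

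Two organizational differences are worth noting. First, rather than treating $(V,W)$ as a fully coupled system, the paper eliminates the interior unknown first: for each boundary section $\Phi\in\calC^{2,\alpha}(NS^k)$ it produces the minimal filling $Q_\Phi$ by a separate implicit function theorem (nondegeneracy of $J_{B^{k+1}}=\Delta_{B^{k+1}}$ with Dirichlet data), so the remaining unknown is just $\Phi$ and the linearization reduces to $J_{S^k}^\parallel$ on the tangential part and $J_{S^k}^\perp - k\,D_{B^{k+1}}$ (a Dirichlet-to-Neumann contribution) on the normal part. Second, the paper does \emph{not} invoke the abstract principle ``the reduction is variational, hence $P_\calK\calF = d_{\Pi_p}\Psi$''. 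Instead it argues directly: plugging the three families of test variations (parallel translation of $p$ along $\Pi_p$, along $\Pi_p^\perp$, and rotation of $\Pi_p$ in $T_pM$) into the first variation formula for $\calE_\e$ yields a closed system of inequalities among the obstruction parameters $\vec a,\vec c_\mu,d_\mu$, of the form $\|\vec a\|+\sum(\|\vec c_\mu\|+|d_\mu|)\le c\,\e^2(\|\vec a\|+\sum(\|\vec c_\mu\|+|d_\mu|))$, which forces them all to vanish at a critical $\Pi_p$. This soft argument avoids having to check that the reduced energy is a natural constraint in the Ambrosetti--Malchiodi sense, which is what your ``uniformly invertible identification'' would require; if you pursue your route you should make that step explicit.
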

The function $\Psi$ is essentially just the associated energy functional restricted to a particular finite dimensional
set of approximately CMC submanifolds.

The outline of this paper is as follows. We first give a more careful description of our proposed definition of constant mean
curvature and its relationship to the associated energy functional. We introduce the linearization and second variation of
this energy, then compute these operators in detail for the round sphere $S^k \subset \RR^{m+1}$.  The construction
of `small' solutions of the CMC problem concentrating around critical points of the function $\Psi$ proceeds in stages.
We construct a family of approximate solutions, then solve the problem up to a finite dimensional defect. This defect
depends on certain parameters in the approximate solution, and in the last step we employ a variational argument 
to choose the parameters appropriately to solve the exact problem.  Certain long technical calculations are relegated
to the appendices. 

\section{Preliminaries} 
In this section we begin by setting notation and recalling some standard formul\ae. This is followed by the introduction of a 
variational notion of constant mean curvature for closed submanifolds of arbitrary codimension. We compute the first and 
second variations of the associated energy functional, and then explain what these look like for round spheres (of arbitrary 
codimension) in $\RR^{m+1}$. 

\subsection{The mean curvature vector}
Let $(M^{m+1},g)$ be a compact smooth Riemannian manifold, and consider smooth, closed $k$-dimensional submanifolds 
$K \subset M$  and $(k+1)$-dimensional submanifolds $Q$ with nonempty boundary $K$, $1 \leq k \leq m$.  We write
$\nabla^\Sigma$ for the connection on any embedded submanifold $\Sigma$, and reserve $\nabla$ for the full Levi-Civita 
connection on $M$.

The second fundamental form of $\Sigma$ is the symmetric bilinear form on $T\Sigma$ taking values in the normal bundle 
$N\Sigma$ defined by
\[
h(X,Y) : = \nabla_X\, Y - \nabla^\Sigma_XY = \pi_{N\Sigma} \, \nabla_X \, Y;
\]
here $ \pi_{N\Sigma} $ is the fibrewise orthogonal projection $T_\Sigma M \to N\Sigma$. The trace of $h$ is a section of $N\Sigma$, 
and is called the mean curvature vector field 
\[
H_\Sigma : = \tr^g \, h \,  = \sum_{i=1}^{\dim \Sigma} h (E_i, E_i),
\]
where $\{E_i\}$ is any orthonormal basis for $T_p\Sigma$. By definition, $\Sigma$ is minimal provided $ H_\Sigma \equiv 0 $.

\subsection{Constant mean curvature in high codimension}  
Let us now specialize to the case where $Q^{k+1} \subset M$ is a smooth, compact submanifold with boundary, with $\del Q = K$. 
The normal bundle $NK$ decomposes as an orthogonal direct sum
\[
NK = NK^{\perp} \oplus NK^{\parallel} \, ,
\]
where $NK^{\parallel} = NK \cap TQ$ has rank $1$ and $NK^{\perp} = N_K (NQ) = NK \cap NQ$ has rank $m-k$.  We shall write $n$ for the inward pointing unit normal to $K$ in $Q$. Thus if $ \Phi \in NK$, then $ \Phi = [\Phi]^\perp + [\Phi]^{\parallel} = [\Phi]^\perp + \phi\, n$ for some scalar function $\phi$. 

\begin{definition}
The closed submanifold $K \subset M$ is said to have {\it constant mean curvature} if $K = \del Q$ where $Q$ is minimal in $M$, $K$ has constant mean curvature in $Q$ and the $Q$-normal component $[H_K]^\perp  \in NK^{\perp}$ vanishes.
\label{def:1}
\end{definition}

A key motivation is that this definition is variational, where the relevant energy is given by
\begin{equation}
\calE_{h_0} (Q) : = {\rm Vol}_k (\partial Q ) - h_0 \,  {\rm Vol}_{k+1} (Q ). 
\label{energy}
\end{equation}
 
\begin{proposition}
The submanifold $K = \del Q$ has constant mean curvature $h_0$ (in the sense of Definition~\ref{def:1}) if and only if 
\[
\left. D\mathcal E_{h_0}\right|_{Q} = 0.
\]
\end{proposition}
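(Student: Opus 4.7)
The plan is to compute the first variation $D\mathcal{E}_{h_0}|_Q(\Xi)$ for an arbitrary smooth vector field $\Xi$ along $Q$ (generating a one-parameter family of deformations $Q_t$), put it in a form where the bulk contribution, the normal-to-$Q$ boundary contribution, and the $NK^\parallel$ boundary contribution appear as three integrals against independent test data, and then invoke the fundamental lemma of the calculus of variations. One direction is then immediate: if $H_Q \equiv 0$, $H_K \equiv h_0\,n$, and $[H_K]^\perp \equiv 0$, each of the three integrands vanishes identically.

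For the first variation of the bulk volume, I would split $\Xi = \Xi^{TQ} + \Xi^{NQ}$ and use $\mathrm{div}_Q \Xi = \mathrm{div}_Q \Xi^{TQ} - \langle H_Q, \Xi^{NQ}\rangle$. Stokes' theorem on the tangential divergence produces a boundary integral whose only surviving component, once one recalls that the outward conormal to $K$ in $Q$ is $-n$, depends on $\phi := \langle \Xi, n\rangle$, giving
\[
\tfrac{d}{dt}\Big|_{t=0} \mathrm{Vol}_{k+1}(Q_t) = -\int_Q \langle H_Q, \Xi^{NQ}\rangle\,dV - \int_K \phi\,d\sigma.
\]
For the area of $K$ I would do the analogous computation; since $K$ is closed there is no further boundary term, and decomposing the normal part of $\Xi|_K$ as $\phi\, n + \Psi$ with $\Psi \in NK^\perp$ and writing $H_K = h\, n + [H_K]^\perp$, one gets
\[
\tfrac{d}{dt}\Big|_{t=0} \mathrm{Vol}_k(K_t) = -\int_K \bigl(h\,\phi + \langle [H_K]^\perp, \Psi\rangle\bigr)\,d\sigma.
\]
Combining,
\[
D\mathcal{E}_{h_0}|_Q(\Xi) \;=\; h_0 \int_Q \langle H_Q, \Xi^{NQ}\rangle\,dV \;-\; \int_K\!\bigl((h-h_0)\phi + \langle [H_K]^\perp, \Psi\rangle\bigr)\,d\sigma.
\]

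For the converse, I would exploit that the three pieces of test data can be chosen independently. First take $\Xi$ compactly supported in the interior of $Q$ with arbitrary $NQ$-valued profile: vanishing forces $H_Q = 0$. With $Q$ now minimal, testing against $\Xi$ supported near $K$ with prescribed boundary data --- choosing $\phi$ arbitrary with $\Psi = 0$ gives $h = h_0$, and choosing $\Psi$ arbitrary with $\phi = 0$ gives $[H_K]^\perp = 0$ --- recovers the three clauses of Definition~\ref{def:1}. The only delicate point, and really the main thing to get right, is the sign bookkeeping on the boundary terms (outward versus inward conormal, and the splitting of $\Xi|_K$ into its $TK$, $NK^\parallel$, and $NK^\perp$ components). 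Once those are correctly tracked, the argument is a routine application of integration by parts and the fundamental lemma.
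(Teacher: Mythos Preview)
Your proposal is correct and follows essentially the same route as the paper: compute the first variations of $\mathrm{Vol}_k(K_t)$ and $\mathrm{Vol}_{k+1}(Q_t)$, combine them, and read off the three conditions. The paper simply quotes the two classical first-variation formulas (deferring their derivation to the Appendix) and states the conclusion in one line, whereas you spell out the derivation via the divergence theorem, explicitly split $H_K$ and $\Xi|_K$ into their $NK^\parallel$ and $NK^\perp$ components, and argue the converse more carefully with compactly supported and boundary-localized test fields; but the underlying argument is identical.
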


The meaning of the differential here is the usual one. Let $\Xi$ be a smooth vector field on $M$ and denote by $\xi_t$ its associated flow.
For $t$ small, write $Q_ t = \xi (t, Q)$ and $K_t := \partial Q_t = \xi (t, K)$. The requirement in the Proposition is then that for any smooth
vector field $\Xi$, 
\[
\left. \frac{d\,}{dt} \calE_{h_0}(Q_t) \right|_{t=0} = 0.
\]

The proof is standard. The classical first variation formula (see Appendix) states that
\[
\left. \frac{d\,}{dt} {\rm Vol}(K_t) \right|_{t=0} =  - \int_{K}  g(H_K  , \Xi)  \, {\rm dvol}_K,
\]
and
\[
\left. \frac{d\,}{dt} \, {\rm Vol} (Q_t ) \, \right|_{t=0}=  - \int_{Q}  g( H_Q , \Xi) \, {\rm dvol}_Q - \int_K  g( n ,  \Xi) \, {\rm dvol}_K \, .
\]
It follow directly from these that 
$$
\left. \frac{d\,}{dt}\right|_{t=0} \calE_{h_0} (Q_t) = 0,
$$ 
for all vector fields $\Xi$ if and only if $H_{K} = h_0 \, n$ and $H_{Q} \equiv 0$, as claimed. 

The definition above coincides with the standard meaning of CMC when $K$ is a hypersurface in $M$ which is the boundary of a region $Q$. In particular, if $K^k \subset \RR^{k+1} \subset \RR^{m+1}$ and $K$ has CMC as a hypersurface in $\RR^{k+1}$, then it has CMC in the sense of Definition~\ref{def:1}. In particular, any round sphere $S^k \subset \RR^{m+1}$ has CMC in this sense. 

\subsection{The Jacobi operator}
Let us now study the differential of the mean curvature operator, which is known as the Jacobi operator. For this subsection, we revert to considering an arbitrary submanifold $\Sigma$, either closed or with boundary, and shall now recall the expression for this operator.

The Jacobi operator $J_\Sigma$ is the differential of the mean curvature vector field with respect to perturbations of $\Sigma$. To describe this more carefully, set $B_\e(N\Sigma) = \{ (q,v) \in T_\Sigma M: |v| < \e\} $ and consider the exponential map $\exp$ from an $\e$-neighborhood of the zero section in $T_\Sigma M$ into $M$. Since $\left. \exp_*\right|_{\{v=0\}} = \mathrm{Id}$, If $\Phi \in \calC^2(\Sigma; T_\Sigma M)$ has $||\Phi||_{\calC^0}$ sufficiently small, then $\Sigma_\Phi:=\{ \exp_q( \Phi(q)): q \in \Sigma\}$ is an embedded submanifold.   We shall denote the family of submanifolds $\Sigma_{s\Phi}$ by $\Sigma_s$, and their mean curvature vector fields by $H_s$.  We also  write $F_s: \Sigma \to \Sigma_s$ for the map $q \mapsto \exp_q (s \Phi(q))$. By definition, 
$$
J_\Sigma (\Phi) = \left. \nabla_{\del/\del s} H_s \right|_{s=0}.
$$

We shall be particularly interested in the case where $\Phi$ is a section of the normal bundle $N\Sigma$.  When $\del \Sigma \neq \emptyset$, we also require that $\Phi = 0$ on $\del \Sigma$.  The operator $\pi_{N\Sigma} \circ J_\Sigma$ will be denoted $J_\Sigma^{N}$. We recall in the appendix the proof of the standard formula 
\begin{equation}
J_\Sigma^{N} = - \Delta^{N}_\Sigma + \Ric_\Sigma^{N} + \frakH^{(2)}_\Sigma,
\label{JacK}
\end{equation}
where $\Delta^{N}_\Sigma$ is the (positive definite) connection Laplacian on sections of $N\Sigma$, 

\smallskip

\begin{center}
\scalebox{0.9}{$ \forall \Phi \in N\Sigma, \quad \Delta_\Sigma^{N} \, \Phi = \sum \limits_{i=1}^{\mathrm{dim}(\Sigma)} \nabla^{ \small{N}}_{E_i} \, \nabla^{N}_{E_i} \, \Phi - \nabla^{N}_{ \huge{ \nabla^{N}_{E_i} E_i } } \, \Phi, \quad \nabla^{N}_X Y = \pi_{N_\Sigma} \circ \nabla_X Y $}
\end{center}


\medskip

and the other two terms are the following symmetric endomorphisms of $N\Sigma$:
\begin{itemize}
\item[(i)] 
The orthogonal projection $ \Ric_{\Sigma}^N = \pi_{N_\Sigma} \circ \Ric_\Sigma $ of the partial Ricci curvature $\Ric_\Sigma$,  defined by 
\begin{equation}
\begin{split}
\langle \Ric_\Sigma \, X , Y \rangle &:=  - \tr^g \, \langle R(\cdot , X) \cdot, Y)   \\ 
& =  - \sum_{i=1}^{\dim \Sigma} \langle R(E_i , X) E_i, Y \rangle \quad \mbox{for all} \quad X, Y \in TM  
\end{split}
\label{parRic}
\end{equation}
note that the curvature tensor appearing on the right is the one on all of $M$, and is not the curvature tensor for $\Sigma$; 
\item[(ii)] the square of the shape operator, defined by
\begin{equation}
\frakH_\Sigma^{(2)}(X) := \sum_{i,j=1}^{\dim \Sigma}  \langle h(E_i, E_j), X \rangle h (E_i, E_j), \quad \mbox{for all} \quad X \in TM
\label{sqshape}
\end{equation}
\end{itemize}

In general, $J_\Sigma(\Phi) \neq J^{N}_\Sigma (\Phi)$ since $J_\Sigma(\Phi)$ has a nontrivial component $J_\Sigma^{T}(\Phi)$ which is parallel to $\Sigma$; as we show later, that part is canceled in our final formula so we do not need to make it explicit. Note, however, that $J_\Sigma^{T}(\Phi)$ vanishes when $\Sigma$ is minimal. Indeed, writing the mean curvature vector field to $\Sigma_{s\Phi}$ in the form
$$
H_s = \sum_\nu \langle H_s, N_\nu(s)\rangle N_\nu(s),
$$ 
where $N_\nu(s)$, $\nu = \dim \Sigma +1, \ldots , m+1$ is a local orthonormal frame for $N\Sigma_{s\Phi}$ we find
\begin{multline*}
[J_\Sigma(\Phi)]^{T}  =  \sum_\nu  \Big( \left( \langle \left. \nabla_{\del/\del s} H_s \right|_{s=0}  , N_\nu(0) \rangle  + \langle H_\Sigma,  \left. \nabla_{\del/\del s} \right|_{s=0} N_\nu(s) \rangle \right) N_\nu(0) \\[2mm]  
\langle H_\Sigma, N_\nu(0) \rangle \left. \nabla_{\del/\del s} \right|_{s=0} N_\nu \Big)^{T} = \sum_\nu \langle H_\Sigma, N_\nu \rangle \left[ \left. \nabla_{\del/ \del s} N_\nu(s) \right|_{s=0} \right]^{T},
\end{multline*}
and if $H_\Sigma = 0$, we have $ J_{\Sigma}^{T} = 0 $.
 
\subsection{The second variation of $\calE_{h_0}$} 
We set 
\[
\calC^{2, \alpha}_0 (NQ) : = \{V \in \calC^{2,\alpha}(NQ): \left. V \right|_K = 0\}. 
\]
With this notation in mind, we have the:
 \begin{definition} \label{nondeg}
The minimal submanifold $Q$ is nondegenerate if 
\[
J_Q  : \calC^{2, \alpha}_0 (NQ)   \longrightarrow \calC^{0, \alpha} (NQ) , 
\]
is invertible. 
\end{definition}

\begin{lemma} \label{MinSubman} If $Q$ is nondegenerate, then there is a smooth mapping $\Phi \mapsto Q_\Phi$ from a neighbourhood of $0$ in $\calC^{2,\alpha}(NK)$ into the space of $(k+1)$-dimensional minimal submanifolds of $M$ with $\calC^{2,\alpha}$ boundary, such that $Q_0$ is the initial submanifold $Q$ and $\del Q_\Phi = K_\Phi$. 
\label{defQ}
\end{lemma}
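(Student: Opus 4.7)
The plan is a direct application of the implicit function theorem, using the assumed invertibility of $J_Q$ with Dirichlet boundary data. The one subtle point is that $\Phi \in \calC^{2,\alpha}(NK)$ is a \emph{boundary} datum while the minimality equation $H_{Q_\Phi} = 0$ is most naturally an equation for a section of a normal bundle on the fixed interior $Q$, so I first need to move the problem onto a fixed function space.

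First, I would fix a continuous linear extension operator
\[
E : \calC^{2,\alpha}(NK) \longrightarrow \calC^{2,\alpha}(T_QM), \qquad E(\Phi)|_K = \Phi,
\]
with $\widetilde\Phi := E(\Phi)$ supported in a small collar of $K$ in $Q$. For $\|\Phi\|$ small,
\[
Q^{(0)}_\Phi := \{ \exp_q(\widetilde\Phi(q)) : q \in Q\}
\]
is a smooth embedded submanifold of $M$ with boundary $K_\Phi = \{\exp_q(\Phi(q)) : q \in K\}$, and $Q^{(0)}_0 = Q$. This extension step is essential: the tangential component $[\Phi]^\parallel \in NK^\parallel \subset TQ|_K$ is invisible to normal graphs over $Q$, so one cannot hope to parametrize $Q_\Phi$ directly as such a graph.

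Next, I would seek $Q_\Phi$ as the normal exponential graph over $Q^{(0)}_\Phi$ of a section of $NQ^{(0)}_\Phi$ that vanishes on $K_\Phi$. Using parallel transport along the geodesic segments $s \mapsto \exp_q(s\widetilde\Phi(q))$, identify $NQ^{(0)}_\Phi$ with $NQ$ via a bundle isomorphism $\iota_\Phi$ that depends smoothly on $\Phi$ and equals the identity at $\Phi = 0$. The problem then becomes: find $V \in \calC^{2,\alpha}_0(NQ)$ satisfying $\calF(\Phi, V) = 0$, where
\[
\calF(\Phi, V) := \iota_\Phi^{-1}\bigl( H_{Q^{(0)}_\Phi,\, \iota_\Phi V} \bigr) \in \calC^{0,\alpha}(NQ)
\]
is the pullback to $NQ$ of the mean curvature vector of the submanifold obtained from $Q^{(0)}_\Phi$ by exponentiating $\iota_\Phi V$.

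Finally, $\calF$ is smooth between the indicated Hölder spaces, with $\calF(0,0) = 0$ because $Q$ is minimal, and $D_V \calF(0,0) = J_Q$ by the very definition of the Jacobi operator. By Definition~\ref{nondeg}, $J_Q : \calC^{2,\alpha}_0(NQ) \to \calC^{0,\alpha}(NQ)$ is an isomorphism, so the implicit function theorem produces a unique smooth map $\Phi \mapsto V(\Phi)$, defined near $\Phi = 0$, with $\calF(\Phi, V(\Phi)) = 0$ and $V(0) = 0$. Setting $Q_\Phi := \{\exp_{q_\Phi}\bigl((\iota_\Phi V(\Phi))(q_\Phi)\bigr) : q \in Q\}$, where $q_\Phi = \exp_q(\widetilde\Phi(q))$, yields the desired smooth family of minimal submanifolds with $\del Q_\Phi = K_\Phi$. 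The main obstacle is the first two steps: once the moving boundary has been absorbed into the extension $\widetilde\Phi$ and the normal bundles identified via $\iota_\Phi$, the remainder is a textbook application of the implicit function theorem, with the only additional work being the routine verification that the mean curvature operator is smooth between the chosen Hölder spaces.
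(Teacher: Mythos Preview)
Your proposal is correct and follows essentially the same approach as the paper: fix a linear extension of the boundary datum $\Phi$ to a section over $Q$, correct by a normal section vanishing on the boundary, pull the mean curvature back to a fixed bundle over $Q$ via parallel transport, and apply the implicit function theorem using the invertibility of $J_Q$. The only cosmetic difference is that the paper exponentiates the sum $V_\Phi + W$ in a single step (and then projects the transported mean curvature onto $NQ$), whereas you exponentiate in two stages via $Q^{(0)}_\Phi$; note that your map $\iota_\Phi^{-1}$ as written identifies $NQ^{(0)}_\Phi$ with $NQ$ but the mean curvature actually lives in the normal bundle of the twice-perturbed surface, so strictly speaking you need one more transport-and-project (exactly the step the paper spells out), though this is clearly what you intend by ``pullback to $NQ$''.
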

\begin{proof} 
Fix a continuous linear extension operator   
\[
\calC^{2,\alpha}(NK) \ni \Phi \mapsto V_\Phi \in \calC^{2,\alpha}(T_Q M).
\]
Thus $V_\Phi$ is a vector field along $Q$ which restricts to $\Phi$ on $K$. Without loss of generality, we can assume that $V_\Phi \in TQ$ 
if $[\Phi]^\perp =0$ and $V_\Phi \in NQ$ when $[\Phi]^\| =0$. Next, let $W$ be a $\calC^{2,\alpha}$ section of $NQ$ which vanishes on 
$K$. If both $||\Phi||_{2,\alpha}$ and $||W||_{2,\alpha}$ are sufficiently small, then $\exp_Q (V_\Phi + W)$ is an embedded $\calC^{2,\alpha}$ 
submanifold $Q_U$, $U = V_\Phi + W$, with boundary $K_\Phi : = \partial Q_U$.  Denoting its mean curvature vector by $ H(\Phi,W)$, 
then 
\[
\left. D_WH\right|_{(0,0)} (W) = J_Q W.
\]

Since $Q$ is minimal, $\left. D_WH\right|_{(0,0)} (W)$ takes values in $NQ$, whereas $H(\Phi,W) \in N{Q_U} \subset T_{Q_U}M$, so 
we cannot directly apply the implicit function theorem. To remedy this, first let $\widetilde{H}(\Phi,W)$ be the parallel transport 
of $H(\Phi,W)$ along the geodesic $s \mapsto \exp_q(s U(q))$, from $s=1$ to $s=0$. Parallel transport preserves regularity 
(this reduces to the standard result on smooth dependence on initial conditions for the solutions of a family of ODE's), 
so $\widetilde{H}(\Phi,W)$ is a $\calC^{0,\alpha}$ section of $T_Q M$. Now define 
$$
\widehat{H}(\Phi,W) := \pi_{NQ} \circ \widetilde{H}(\Phi,W),
$$ 
where $\pi_{NQ} : T_Q M \to NQ$ is the orthogonal projection. Since $H(\Phi,W) \in N_{Q_U} M$ and since $||U||_{\calC^1}$ is small, 
$\widetilde{H}(\Phi,W)$ lies in the nullspace of $\pi$ at any $q \in Q$ if and only if it actually vanishes. Thus it is enough to look 
for solutions of $\widehat{H}(\Phi,W) = 0$. Notice that $D_W \widehat{H}|_{(0,0)} = J_Q$. We can now apply the implicit function 
theorem to conclude the existence of a $\calC^{2,\alpha}$ map $\Phi \mapsto W(\Phi)$ such that $H(\Phi, W(\Phi)) =
\widehat{H}(\Phi,W(\Phi)) \equiv 0$ for all small $\Phi$.
\end{proof}

We henceforth denote by $Q_\Phi$ the minimal submanifold $ \exp_Q \left( V_\Phi + W(\Phi) \right)$. Observe that when $[\Phi]^\perp =0$,
 the submanifold parametrized by $\exp_Q (V_\phi)$ is $\mathcal O ( \| \Phi\|_{\mathcal C^{2, \alpha}}^2)$ close to $Q_\Phi$; this is 
easy to check when $\Phi : = \phi \, n$ where $\phi$ is small. Therefore, in this `tangential' case, we conclude that 
\[
U_\Phi  = V_{\Phi}  + \mathcal O ( \| \Phi\|_{\mathcal C^{2, \alpha}}^2).
\]
Next, when $[\Phi]^\| =0$, we define $Z_\Phi$ as the solution of 
\[
J_Q Z_{\Phi} = 0, \qquad \left. Z_{\Phi} \right|_K = \Phi , 
\]
and it is easy to check that the submanifold parametrized by $\exp_Q (Z_\phi)$ is also 
$\mathcal O ( \| \Phi\|_{\mathcal C^{2, \alpha}}^2)$ close to $Q_\Phi$. We summarize all this in the
\begin{lemma}
When $\| \Phi\|_{\mathcal C^{2, \alpha}} $ is small, we have the decomposition 
\[
U_\Phi  = V_{[\Phi]^\|}  + Z_{[\Phi]^\perp} + \mathcal O ( \| \Phi\|_{\mathcal C^{2, \alpha}}^2),
\]
where $Z_{[\Phi]^\perp}$ is the solution of 
\[
J_Q Z_{[\Phi]^\perp} = 0, \qquad \left. Z_{[\Phi]^\perp} \right|_K = [\Phi]^\perp .
\]
\end{lemma}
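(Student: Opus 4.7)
The plan is to exploit the smooth dependence of $U_\Phi$ on $\Phi$ given by Lemma~\ref{MinSubman}, and to combine the two special cases (tangential and normal boundary data) that were already outlined immediately above the lemma. Since the map $\Phi \mapsto U_\Phi = V_\Phi + W(\Phi)$ is $\mathcal C^{2,\alpha}$ near $0$ with $U_0 = 0$, Taylor's theorem yields
\[
U_\Phi = L\,\Phi + \mathcal O\!\left(\|\Phi\|_{\mathcal C^{2,\alpha}}^{\,2}\right),
\]
where $L := \left.DU\right|_{\Phi=0}$ is a bounded linear operator. By linearity of $L$ together with the orthogonal splitting $\Phi = [\Phi]^{\|} + [\Phi]^{\perp}$, it suffices to identify $L\,\Phi$ separately on the factors $NK^{\|}$ and $NK^{\perp}$.

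For the tangential case $[\Phi]^{\perp} = 0$, the extension $V_\Phi$ is chosen to lie in $TQ$. Because the geodesic $s \mapsto \exp_q(s V_\Phi(q))$ has initial velocity tangent to $Q$, it leaves $Q$ only at order $s^2$; a Gauss-formula computation of the form $\exp_q(V) = q + V - \tfrac12 h_Q(V,V) + \mathcal O(|V|^3)$ shows that $\exp_Q(V_\Phi)$ differs from $Q$ in the normal direction by $\mathcal O(\|\Phi\|_{\mathcal C^{2,\alpha}}^{\,2})$, while its boundary is exactly $K_\Phi = \exp_K(\Phi)$. Thus $\exp_Q(V_\Phi)$ is an approximate minimal submanifold with the correct boundary and with mean curvature of size $\mathcal O(\|\Phi\|_{\mathcal C^{2,\alpha}}^{\,2})$. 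The uniqueness built into the implicit function theorem proof of Lemma~\ref{MinSubman} then forces $W(\Phi) = \mathcal O(\|\Phi\|_{\mathcal C^{2,\alpha}}^{\,2})$, so that $L [\Phi]^{\|} = V_{[\Phi]^{\|}}$.

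For the normal case $[\Phi]^{\|} = 0$, the nondegeneracy hypothesis produces a unique Jacobi field $Z_\Phi \in NQ$ with $J_Q Z_\Phi = 0$ and $\left.Z_\Phi\right|_K = \Phi$. Since $Z_\Phi$ lies in the kernel of the linearization of the mean curvature at $Q$, the submanifold $\exp_Q(Z_\Phi)$ has mean curvature $\mathcal O(\|\Phi\|_{\mathcal C^{2,\alpha}}^{\,2})$ and boundary $K_\Phi$. The same uniqueness argument then shows that $\exp_Q(Z_\Phi)$ is $\mathcal O(\|\Phi\|_{\mathcal C^{2,\alpha}}^{\,2})$-close to $Q_\Phi$, whence $L[\Phi]^{\perp} = Z_{[\Phi]^{\perp}}$. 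Summing the two linear contributions yields the desired decomposition
\[
U_\Phi = V_{[\Phi]^{\|}} + Z_{[\Phi]^{\perp}} + \mathcal O\!\left(\|\Phi\|_{\mathcal C^{2,\alpha}}^{\,2}\right).
\]

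The main obstacle is the tangential case: one must carefully justify that an extension $V_\Phi \in TQ$ produces only a quadratic normal deviation of the exponential image from $Q$, since $Q$ is the reference submanifold against which the implicit function theorem is set up. Once this second-order Gauss-formula estimate is in hand, the rest of the proof is essentially a linearization identity read off from the implicit function theorem framework already established in Lemma~\ref{MinSubman}.
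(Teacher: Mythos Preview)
Your argument is correct and follows essentially the same route as the paper, which treats the tangential and normal cases separately in the discussion immediately preceding the lemma and then simply states the decomposition as a summary. Your contribution is to make explicit the Taylor-expansion/linearity step (introducing $L = DU|_{\Phi=0}$) that justifies superposing the two special cases, something the paper leaves implicit.
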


Now consider the energy $\calE_{h_0}$ along a one-parameter family $s \mapsto Q_s := Q_{s\Phi}$ of minimal submanifolds 
with boundaries $K_s : = \partial Q_s = K_{s\Phi}$.  By the formul\ae\ of the last subsection, 
\[
\frac{d\,}{ds} \calE_{h_0}(Q_s) = - \int_{K_s} g( H_s - h_0 \, n_s, \partial/\partial s) \, {\rm dvol}_{K_s} ,
\]
where $H_s$ is the mean curvature of $K_s$ and $n_s$ is the inward pointing unit normal to $K_s$ in $Q_s$.  Note that this first variation 
of energy is localized to the boundary; the interior terms vanish because of the minimality of the $Q_s$. Our task is to compute 
\[
\left. \frac{d^2\,}{ds^2} \calE_{h_0}(Q_s) \right|_{s=0} ,
\]
when $Q$ is critical for $\mathcal E_{h_0}$. 

Parametrize both $K_{s}$ and $Q_{s}$ by $y \mapsto F_s(y) := \exp_y( U_{s\Phi}(y))$ (with $ y \in K $ or $ y \in Q $, respectively).  
As before, choose a smooth local orthonormal frame $E_\alpha$ for $TK$, so that $(F_s)_* E_\alpha = E_\alpha(s)$ is a local 
(non-orthonormal) frame for $TK_{s\Phi}$. We then
include $n(s)$, the unit inward normal to $K_{s}$ in $Q_s$. Moreover, we extend $n(s)$ to a vector $\bar n(s) \in TQ_s$ so that it 
satisfies $\nabla^{Q_s}_{\bar n(s)} \bar n(s) = 0$. We supplement this to a complete local frame for $T_{Q_s}M$ (at least near points 
of $K_s$) by adding a local orthonormal frame $N_\mu(s) \in NQ_s$. 
Here we let the indices $\alpha, \beta, \ldots$ run from $1$ to $k$ while $\mu, \nu, \ldots $ run from $k+1$ to $m+1$ . 

\begin{notation} 
Set $ \mathcal H(s) = H(K_s) - h_0 \, H(Q_s) $, where $ h_0 = H_K $. We also write
$$ L_Q =  \left. \nabla_{\del / \del s } \mathcal H_s \right|_{s=0} $$ 
\end{notation}
Note that we can decompose $\mathcal H'(0)$ into $\mathcal H'(0)^{N_K} + \calH'(0)^{T_K}$, its components perpendicular 
and parallel to $K$. Since $\calH(s) \perp K_s$, we have that $\langle \calH(s) , E_\alpha(s) \rangle = 0$, so 
\[
\langle \calH'(0), E_\alpha \rangle + \langle \calH(0), E_\alpha'(0) \rangle = 0.
\]
Since $\calH(0) = 0$, we obtain $ [L_Q]^{T_K} = 0$. 

\medskip

Next decompose $\Phi = [\Phi]^{\perp} + \phi \, n$ into parts perpendicular and parallel to $Q$ (along $K$).  Noting that we can choose the vector field $V_\Phi$ extending $\Phi$ in Lemma~\ref{defQ} so that its component tangent to $Q$ lies in the span of $n$, there is a similar decomposition $ U_\Phi =  [U_\Phi]^\perp + u_\phi \, \bar n(s)$ for the vector field $U_{\Phi}$ constructed in that Lemma, locally near $K_\Phi$; note that $\left. [U_\Phi]^\perp \right|_K = [\Phi]^\perp$ and $\left. u_\phi \right|_{K} = \phi$.  

\medskip

To see that $E_\alpha'(0) = \nabla_{E_\alpha} \Phi$, choose a curve $c(t)$ in $K$ with $c(0) = p$, $c'(0) = E_\alpha$ and define 
$G(t,s) = \exp_{c(t)} (s \Phi( c(t)))$; we then obtain that
\[
\left. \nabla_{\del/\del s} E_\alpha\right|_{s=0} = \left. \nabla_{\del/ \del s} \nabla_{\del / \del t} \right|_{s=t=0}  G(t,s) = \left. \nabla_{\del/\del t} \Phi(c(t))\right|_{t=0} = \nabla_{E_\alpha} \Phi,
\]
as claimed. To compute $n'(0)$, observe that $(F_s)_*(n(0))$ is always tangent to $Q_s$ and transverse, but not necessarily a unit normal, to $K_s$.
We can adjust it, using the Gram-Schmidt process, to get that 
\[
n(s) = \left( (F_s)_*(n(0)) - \sum c_\alpha E_\alpha(s)\right)/ \left| ((F_s)_*(n(0)) - \sum c_\alpha E_\alpha(s)\right|,
\]
where 
\[
c_\alpha(s) = \langle E_\alpha(s), (F_s)_* n(0) \rangle/ |E_\alpha(s)|^2.
\]
Arguing as before, take a curve $ d(t) $ in $Q$ such that $ d(0) = p$ and $d'(0) = n$ and define 
$\tilde G(t,s) = \exp_{d(t)}(U_{s\Phi}(d(t))) $. Note that $ U_{s\Phi} = s ( V_{[\Phi]^\parallel} + Z_{[\Phi]^\bot}) + 
\mathcal O(s^2 \| \Phi \|^2_{\mathcal C^{2,\alpha}} ) $. We get 
$$ \left. \nabla_{\del/\del s} (F_s)_* n(0) \right|_{s=0} = \left. \nabla_{\del / \del s} \nabla_{\del / \del t} \tilde G(t,s) \right|_{t=s=0} = \nabla_{n} (V_{[\Phi]^\parallel} + Z_{[\Phi]^\perp}) $$ 
and since $c_\alpha(0) = 0$, we obtain
\[
[n'(0)]^{\perp} = \left. \left[\nabla_n V_{[\Phi]^\parallel} + \nabla_n Z_{[\Phi]^\bot} \right] \right|_K^\perp = \left.  \left[ \nabla_n^\perp Z_{\Phi^\perp} + \phi \, \nabla^\perp_n \bar n \right] \right|_K.
\]
Finally, the component $ [n'(0)]^\parallel = 0 $.
Combining these calculations gives the 
\begin{proposition}
If $Q$ is critical for $\calE_{h_0}$, then 
\[ 
L_Q \, \Phi = J_K^{N_K} \Phi - h_0 \, D_Q \Phi, 
\]
where 
\[
D_Q \Phi = \left. \left[ \nabla_n^\perp Z_\Phi + \phi \, \nabla^\perp_n \bar n \right] \right|_K
\]
\end{proposition}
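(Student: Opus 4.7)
My plan is to differentiate $\calH(s) = H(K_s) - h_0\, n_s$, the expression whose vanishing encodes the variational CMC condition, at $s = 0$ along the family $Q_s = Q_{s\Phi}$ of minimal submanifolds supplied by Lemma \ref{MinSubman}, using the leading-order expansion $U_{s\Phi} = s\,(V_{[\Phi]^\parallel} + Z_{[\Phi]^\perp}) + \calO(s^2)$ recorded above. Since the discussion preceding the statement already gives $[L_Q]^{T_K} = 0$, it suffices to identify the $NK$-component of $L_Q\Phi$, and I will treat the two summands of $\calH(s)$ independently.

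I will begin with $\nabla_{\del/\del s}|_{s=0} H(K_s)$. Because $U_{s\Phi}|_K = s\,\Phi + \calO(s^2)$, the boundary $K_s$ is deformed inside $M$ with initial velocity exactly $\Phi$, so by the definition of the Jacobi operator this covariant derivative equals $J_K\Phi$; its $NK$-component is $J_K^{N_K}\Phi$ by formula \eqref{JacK}, and any tangential part is immaterial because of the identity $[L_Q]^{T_K} = 0$.

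I will then handle $-h_0\,\nabla_{\del/\del s}|_{s=0} n_s$. Essentially all of the work is already done in the paragraphs immediately preceding the statement: unwinding Gram-Schmidt in the formula for $n(s)$ and applying the chain rule through the definition of $U_{s\Phi}$ gives $[n'(0)]^\perp = \nabla_n^\perp Z_{[\Phi]^\perp} + \phi\,\nabla_n^\perp \bar n$, while the unit-length constraint on $n_s$ forces $[n'(0)]^\parallel = 0$. Thus the $NK$-component of $n'(0)$ is exactly $D_Q\Phi$, and combining with the first step yields $L_Q\Phi = J_K^{N_K}\Phi - h_0\, D_Q\Phi$.

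The one point of real care, and what I expect to be the only obstacle, is to make sure that truncating $U_{s\Phi}$ at first order is legitimate when computing $n'(0)$: any $\calO(s^2)$ correction to $U_{s\Phi}$ must be shown to produce an $\calO(s^2)$ correction to both $(F_s)_*n(0)$ and to the Gram--Schmidt coefficients $c_\alpha(s)$, and hence to contribute nothing to the first derivative at $s=0$. Once this verification is noted, the formula in the proposition assembles directly from the two pieces above.
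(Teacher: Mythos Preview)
Your proposal is correct and follows essentially the same route as the paper: the paragraphs immediately preceding the proposition already compute $E_\alpha'(0)$, $[n'(0)]^\perp$, and $[n'(0)]^\parallel = 0$, and establish $[L_Q]^{T_K}=0$, so the proposition is obtained exactly as you describe by assembling $\nabla_{\partial/\partial s}|_{s=0} H(K_s) = J_K\Phi$ with the computed $n'(0)$. Your reading $\calH(s) = H(K_s) - h_0\, n_s$ is the one consistent with both the first-variation formula and the statement of the proposition (the displayed definition $\calH(s) = H(K_s) - h_0\, H(Q_s)$ in the Notation block appears to be a slip, since $H(Q_s)\equiv 0$ along the minimal family), and your remark that the $\calO(s^2)$ remainder in $U_{s\Phi}$ contributes nothing to $n'(0)$ is precisely the justification the paper is tacitly using when it differentiates $\tilde G(t,s)$.
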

\
\subsection{The linearization at $K = S^k$}
We conclude this section by discussing the precise form of this linearization, and its nullspace, when 
\[
K = S^k \times \{0\} \subset Q = B^{k+1} \times \{0\} \subset \RR^{m+1},
\]
since this is our basic model later.  It is easy to see that $B^{k+1}$ is critical for $\calE_{k}$. 

The unit inward normal to $S^k$ in $B^{k+1}$ is $ n_{S^k}(\Theta) = - \Theta $.  If $ \Phi \in \calC^{2,\alpha}(NS^k)$, then
\[
\Phi = [\Phi]^\perp - \phi \, \Theta,
\]
where the first term on the right is perpendicular to $B^{k+1}$. The operator $J_{S^k}^{N}$ acts on these two components 
separately, via $J_{S^k}^{\perp}$ and $J_{S^k}^\parallel$, respectively. 

The first of these operators acts on sections of the trivial bundle of rank $m-k$.  Obviously, $\Ric^N_{S^k} = 0$, cf.\ \eqref{parRic}, 
and $(\frakH^{(2)}_{S^k})^\perp = 0$ as well, so 
\[
J_{S^k}^\perp = \Delta_{S^k} 
\]
acting on $(m-k)$-tuples of functions. Its eigenvalues are $\ell( k + \ell-2)$. The operator $D_{B^{k+1}}$ also acts on sections 
of the trivial bundle $\left. N B^{k+1}\right|_{S^k}$. In fact, since $J_{B^{k+1}} = \Delta_{B^{k+1}}$, this operator is simply the standard 
Dirichlet-to-Neumann operator for the Laplacian (acting on $\RR^{m-k}$-valued functions). Its eigenfunctions are the restrictions 
to $r=1$ of the homogeneous harmonic 
polynomials $P(x)$, $x = r \Theta $, $ \Theta \in S^k$.  If $P$ is homogeneous of order $\ell$, then $P(x) = r^\ell P(\Theta)$,
so $D_{B^{k+1}} P(\Theta) = -\ell P(\Theta)$ (recall we are using the inward-pointing normal). Combining these two
operators, we see that $\Delta_{S^k} - k D_{B^{k+1}}$ has eigenvalues $-\ell(k + \ell -1) + k\ell = - \ell(\ell-1)$, hence
\[
\left( J_{S^k}^\perp - k D_{B^{k+1}} \right) [\Phi]^\perp = 0 \Rightarrow [\Phi]^\perp \in \mathrm{span}\,\{ (a_\mu + b_\mu x_\mu) E_\mu \},
\]
where $E_\mu$, $\mu = k+2, \ldots, m+1$ is an orthonormal basis for $N B^{k+1} = \RR^{m-k}$. 

The remaining part is 
\[
J_{S^k}^{\parallel} =  \Delta_{S^k} + k,
\]
since $\Ric_{S^k} = 0$ and $\frakH^{(2)}_{S^k} = k \, \mathrm{Id}$. Thus
\[
J_{S^k}^\parallel  ( \phi \, \Theta ) = J_{S^k}^\parallel (\phi) \, \Theta = 0 \Rightarrow  \phi \in \mathrm{span}\,\{x_1, \ldots, x_{k+1}\}.
\]

We have now shown that the nullspace $\calK$ of $L_{B^{k+1}}$ splits as $\calK^\perp \oplus \calK^{\parallel}$.
The first of these summands is comprised by infinitesimal translations in $\RR^{m-k}$ and infinitesimal rotations in the $\alpha \mu$ 
planes (now $\alpha \leq k+1$); the second summand corresponds to infinitesimal translations in $\RR^{k+1}$. 

\section{Construction of constant mean curvature submanifolds}
We now turn to the main task of this paper, which is to construct small constant mean curvature submanifolds 
concentrated near the critical points of $\calR_{k+1}$.    The first step is to define a family of approximate solutions,
i.e., a family of pair $(Q_\e, K_\e)$ where $Q_\e$ is minimal and has nearly CMC boundary. We then use a variational 
argument to perturb this to a minimal submanifold with exactly CMC boundary.

\subsection{Approximate solutions} 
We adopt all the notation used earlier. Thus we fix $\Pi_p \in G_{k+1}(TM)$ and an orthonormal basis $ E_i $, 
$ 1 \leq i \leq m+1$ of $T_pM$, where $E_a$, $1 \leq a \leq k+1$ span $\Pi_p$ and $E_\mu$, $\mu > k+1$, 
span $\Pi_p^\perp$. This induces a Riemann normal coordinate system $(x^1, \ldots, x^{m+1})$ near $p$, and it is standard that 
\begin{equation}
g_{ij}(x) = g(\del_{x^i}, \del_{x^j}) = \delta_{ij} + \frac13 \sum_{k,\ell} (R_p)_{ikj\ell} \, x^k x^\ell + \mathcal O(|x|^3), 
\label{expg}
\end{equation}
where $\delta$ is the Euclidean metric. 

\subsubsection{Rescaling}
In terms of the map $F_\e: T_p M \to M$, $F_\e(y) = \exp_p ( \e y)$, used earlier, define the metric 
$$ g_\e = \e^{-2} F_\e^* g $$ 
on $T_pM$, or equivalently, work in the rescaled coordinates $y^j = x^j/\e$.
In either case, 
\begin{equation}
g_\e =  |dy|^2 + \e^2 h_\e(y, dy),
\label{estge}
\end{equation}
where $h_\e$ is family of smooth symmetric two-tensors depending smoothly on $\e \in [0,\e_0]$. 
The mean curvature vectors $H^g$ and $H^{g_\e}$ with respect to $g$ and $g_\e$ satisfy
\[
\e^2 \, H^g = (F_\e)_* \, H^{g_\e}, \quad \mbox{and} \qquad \| H^{g_\varepsilon}\|_{g_\varepsilon} = \varepsilon \, \|H^{g}\|_g. 
\]

Let $B^{k+1} = B^{k+1}(\Pi_p) \subset \Pi_p$ be the unit ball and $S^{k+1} = S^{k+1}(\Pi_p) = \del B^{k+1}$, and denote their
images under $F_\e$ by $\calB_\e$ and $\calS_\e$. 
These have parametrizations
\[
S^{k+1} \ni \Theta \longmapsto \exp_p^g ( \e \, \Theta), \quad B^{k+1} \ni y \longmapsto \exp_p^g 
( \e \, \sum_{a=1}^{k+1} y^a E^a ).
\]

In the lemmas \eqref{ball} and \eqref{sphere} below we give the expansion of the mean curvature of $\mathcal B_\e$ and $\mathcal S_\e$ in terms of $\e$. To this end we indroduce two supplementary curvature invariants which are restrictions of the Ricci curvature of the ambient manifold $M$:

\begin{equation*}
\begin{split}
& \mathcal Ric_{k+1}(\Pi_p)(v_1,v_2) = - \sum_{i=1}^{k+1} R_p(E_i,v_1,E_i,v_2), \qquad v_1, v_2 \in \Pi_p \\
& \mathcal Ric_{k+1}^\bot(\Pi_p)(v,N) = - \sum_{i=1}^{k+1} R_p(E_i,v,E_i,N), \qquad v \in \Pi_p, N \in \Pi_p^\bot.
\end{split}
\end{equation*}

Note that 
\[
\mathcal Ric_{k+1}^\bot(\Pi_p) = \left[ \Ric^N_{\mathcal B_\e} \right]_p. 
\]
\begin{lemma} \label{ball}
 We have
 $$ H^g(\mathcal B_\e)(y) = \sum_{\mu=k+1}^{m+1} \left( \dfrac{2 \, \e}{3} \, \mathcal Ric^\bot_{k+1}(\Pi_p)(y,E_\mu) + \mathcal O(\e^2) \right) \, \mathcal N_\mu $$
 where $ \mathcal N_\mu $, $ k+1 \leq \mu \leq m+1 $ is an orthonormal basis of $N \mathcal B_\e$.
\end{lemma}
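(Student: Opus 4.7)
The plan is to compute everything in the rescaled coordinates $y = x/\e$, in which $\calB_\e$ is identified with the flat unit ball $B^{k+1} \subset \Pi_p \subset \RR^{m+1}$ and $g$ becomes $g_\e = |dy|^2 + \e^2 h_\e(y,dy)$. For the Euclidean part, $B^{k+1}$ is totally geodesic and its mean curvature vanishes, so the leading contribution to $H^{g_\e}$ comes from the perturbation $\e^2 h_\e$, and at $\e = 0$ the tensor $h_0$ is read off from \eqref{expg}:
\[
(h_0)_{ij}(y) = \tfrac{1}{3} \sum_{k,\ell} R_{ikj\ell}(p)\, y^k y^\ell.
\]

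Next I would expand the Christoffel symbols of $g_\e$ in powers of $\e$. Since the Euclidean Christoffel symbols vanish,
\[
\Gamma^\mu_{ab}(g_\e)(y) = \tfrac{\e^2}{2}\bigl( \del_a h_{b\mu} + \del_b h_{a\mu} - \del_\mu h_{ab} \bigr)(y) + O(\e^4), \qquad a,b \leq k{+}1,\ \mu \geq k{+}2.
\]
Because $B^{k+1}$ is an affine subspace in $y$-coordinates, the second fundamental form of $B^{k+1}$ in $g_\e$ is exactly $h^\mu_{ab} = \Gamma^\mu_{ab}(g_\e)$ at leading order, and tracing over $a$ yields
\[
\langle H^{g_\e}(B^{k+1})(y), \del_\mu \rangle = \e^2 \sum_{a=1}^{k+1} \del_a h_{a\mu}(y) - \tfrac{\e^2}{2} \sum_{a=1}^{k+1} \del_\mu h_{aa}(y) + O(\e^4).
\]
Now I would substitute $(h_0)_{ij}(y) = \tfrac{1}{3} R_{ikj\ell}(p)\, y^k y^\ell$, differentiate in $y$, and simplify using the symmetries $R_{ijk\ell} = -R_{jik\ell} = -R_{ij\ell k} = R_{k\ell ij}$ and the first Bianchi identity. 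The terms $\del_a h_{aa}$ vanish by antisymmetry and the remaining ones group into the quantity $-\sum_{a=1}^{k+1} R_p(E_a, E_\ell, E_a, E_\mu) y^\ell = \calR ic^{\perp}_{k+1}(\Pi_p)(y, E_\mu)$. A short bookkeeping then gives the combined coefficient $\tfrac{2}{3}$, so
\[
H^{g_\e}(B^{k+1})(y) = \sum_{\mu} \Bigl( \tfrac{2\e^2}{3}\, \calR ic^{\perp}_{k+1}(\Pi_p)(y, E_\mu) + O(\e^4) \Bigr) \del_\mu.
\]

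Finally, I would convert back to the original metric via the identity $\e^2 H^g = (F_\e)_* H^{g_\e}$: since $(F_\e)_* \del_{y^\mu}$ corresponds to a $g$-orthonormal vector only after dividing by $\e$, one factor of $\e$ is absorbed into the frame, and the net result is a single factor of $\e$ in front, yielding the stated formula with $\calN_\mu$ a $g$-orthonormal frame of $N\calB_\e$. The main technical obstacle is the curvature-tensor bookkeeping in the third step; everything else is a routine perturbation computation in normal coordinates. The $\calO(\e^2)$ remainder tracks the higher-order terms of $g_\e$ (equivalently the $O(|x|^3)$ in \eqref{expg}) and is uniform in $y \in B^{k+1}$.
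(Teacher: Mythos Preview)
Your proposal is correct and follows essentially the same route as the paper: compute the Christoffel symbols of $g_\e$ in the rescaled normal coordinates, observe that the second fundamental form of the flat ball $B^{k+1}$ reduces to $\Gamma^\mu_{ab}(g_\e)$, trace over the tangential indices, and convert back via $\e^2 H^g = (F_\e)_* H^{g_\e}$. The paper carries out the Christoffel computation directly in closed form, obtaining $(\Gamma^{g_\e})^\ell_{ij} = -\tfrac{\e^2}{3}(R_{ipj\ell}+R_{i\ell jp})y^p + \calO(\e^3)$, and then traces; you instead keep $h_\e$ abstract, trace first, and then substitute $(h_0)_{ij} = \tfrac13 R_{ikj\ell}\,y^k y^\ell$ and invoke the curvature symmetries. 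These are the same computation in a different order.

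One small bookkeeping slip: your intermediate remainders $O(\e^4)$ are too optimistic. Once you replace $h_\e$ by $h_0$ you introduce an $O(\e)$ error in $h$, hence an $O(\e^3)$ error in the Christoffel symbols and in $H^{g_\e}$ (this is exactly the $\calO(|x|^3)$ term in \eqref{expg} you mention at the end). After rescaling this becomes the stated $\calO(\e^2)$, so the final claim is unaffected, but the displayed $O(\e^4)$'s should read $O(\e^3)$.
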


\begin{remark}
Here and below, we write $\calO(\e^k)$ for a function with $\calC^{0,\alpha}$ norm bounded by $C \e^k$. 
\end{remark}
\begin{proof}
Recall that
$$ H^g(\mathcal B_\e) = \dfrac{1}{\e^2} \, (F_\e)_* \, H^{g_\e}(B^{k+1}) $$
We denote $ \mathcal N_\mu^\e$, $ k+1 < \mu < m+1 $ the orthonormal basis of the normal bundle of $ B^{k+1} \in T_pM $ with respect to the metric $g_\e$ obtained by applying the Gram-Schmidt process to the vectors $E_\mu$, $ k+1 \leq \mu \leq m+1 $. Remark that
$$ g_\e( \mathcal N_\mu,E_\nu) = \delta_{\mu\nu} + \mathcal O(\e^2) $$
We denote $ \mathcal N_\mu = \e \, \mathcal N_\mu^\e $
the orthonormal basis of the normal bundle to $B^{k+1}$ with respect to the metric $ (F_\e)^* g $. We identify $ \mathcal N_\mu $ with $ (F_\e)_* \, \mathcal N_\mu $; these last vector fields form an orthonormal basis of $ N \mathcal B_\e $ with respect to the metric $g$.
The Christoffel symbols corresponding to the metric $g_\e$ are:
\begin{align*}
 (\Gamma^{g_\e})^\ell_{ij}(y) & = \frac12 g_\e^{kq}\left( \del_{y^j} (g_\e)_{iq} + \del_{y^i} (g_\e)_{jq} - \del_{y^q} (g_\e)_{ij} \right) \\[3mm]
 & =  \delta^{q \ell} \,  \frac{ \e^2}{6} \, y^p \, \left( R_{i j q p} + R_{ i p q j}  + R_{j i q p} + R_{j p q i} - R_{iqjp} - R_{ipjq} \right)
 + \calO(\e^3) \\[3mm]
 & = - \frac{\e^2}{3} \left( R_{ipj\ell} + R_{i\ell jp} \right) \, y^p + \mathcal O(\e^3)
\end{align*}
whence
$$ g_\e( \nabla^{g_\e}_{\partial_{y^a}} \partial_{y^b}, \mathcal N_\mu^\e ) = (\Gamma^{g_\e})_{ab}^\mu + \mathcal O(\e^4) $$
\medskip
Taking the trace in the indices $ a,b = 1, \ldots, k+1 $ with respect to $g_\e$ gives the result. 
\end{proof}

\begin{lemma} \label{sphere}
We have
 \begin{multline*} 
  H^g (\mathcal S^k_\varepsilon ) = \left( \frac{k}{\e} - \frac{\e}{3} \, \mathcal Ric_{k+1} (\Pi_p)(\Theta,\Theta) + \mathcal O(\e^2) \right) \, n_{\mathcal S} \\[3mm] 
  + \sum_{\mu=k+1}^{m+1} \left( \frac{ 2 \,\e}{3} \mathcal Ric_{k+1}^\bot(\Pi_p)(\Theta,E_\mu) + \mathcal O(\e^2) \right) \, \mathcal N_\mu
 \end{multline*}
 where $n_{\mathcal S}$ is a unit normal vector field to $\mathcal S^k_\e$ in $\mathcal B^{k+1}_\e$ with respect to the metric $g$. 
 \end{lemma}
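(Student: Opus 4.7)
The proof parallels that of Lemma~\ref{ball}. By the rescaling relation $H^g(\mathcal S^k_\e) = \e^{-2}(F_\e)_* H^{g_\e}(S^k)$ and the pushforward identities $(F_\e)_*(-\Theta) = \e\, n_{\mathcal S} + \mathcal O(\e^3)$ and $(F_\e)_* E_\mu = \e\, \mathcal N_\mu + \mathcal O(\e^3)$, it suffices to expand the mean curvature of the unit sphere $S^k \subset \Pi_p$ in the metric $g_\e$ at the point $y = \Theta$ to order $\e^2$. The new feature compared to Lemma~\ref{ball} is that the trace is taken over the $k$-dimensional tangent space $T_\Theta S^k$, so the result has a nontrivial component along the inward radial direction $-\Theta$ (producing the $n_{\mathcal S}$ term) in addition to the transverse $E_\mu$ components.

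At $y = \Theta$, a Euclidean orthonormal frame $\{\xi_\alpha\}_{\alpha=1}^k$ of $T_\Theta S^k$ becomes $g_\e$-orthonormal after Gram--Schmidt correction: $\xi_\alpha^\e = \xi_\alpha + \e^2 \eta_\alpha + \mathcal O(\e^3)$ with $\eta_\alpha = -\tfrac{1}{6}\sum_\beta R_p(\xi_\alpha, \Theta, \xi_\beta, \Theta)\, \xi_\beta$, as follows from $(g_\e)(\xi_\alpha, \xi_\beta) = \delta_{\alpha\beta} + \tfrac{\e^2}{3} R_p(\xi_\alpha, \Theta, \xi_\beta, \Theta) + \mathcal O(\e^3)$. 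Writing $H^{g_\e}(S^k)_\Theta = \sum_\alpha h^{g_\e}(\xi_\alpha^\e, \xi_\alpha^\e)$, this decomposes as the flat Euclidean mean curvature $-k\,\Theta$, plus a Christoffel contribution $\sum_\alpha [\Gamma^{g_\e}(\xi_\alpha, \xi_\alpha)]^N$ computed from the same expansion of $\Gamma^{g_\e}$ derived in the proof of Lemma~\ref{ball}, plus an orthonormalization contribution $2\e^2 \sum_\alpha h^\delta(\xi_\alpha, \eta_\alpha)$. All other corrections at this order (e.g.\ from the $\mathcal O(\e^2)$ discrepancy between $\delta$- and $g_\e$-normal projections applied to already-$\mathcal O(\e^2)$ vectors) are $\mathcal O(\e^4)$ and may be dropped.

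For the transverse $E_\mu$ coefficient ($\mu > k+1$), only the Christoffel term contributes. The identity $(\Gamma^{g_\e})^\mu_{\xi_\alpha \xi_\alpha}(\Theta) = -\tfrac{2\e^2}{3} R_p(\xi_\alpha, \Theta, \xi_\alpha, E_\mu)$ combined with $R_p(\Theta, \Theta, \Theta, E_\mu) = 0$ allows the tangent-direction sum over $\alpha$ to be extended to a sum over a full orthonormal basis of $\Pi_p$, giving $-\mathcal Ric_{k+1}^\bot(\Pi_p)(\Theta, E_\mu)$ by definition; after rescaling, the resulting coefficient of $\mathcal N_\mu$ is $\tfrac{2\e}{3}\mathcal Ric_{k+1}^\bot(\Pi_p)(\Theta, E_\mu)$ as claimed.

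For the radial $-\Theta$ coefficient, both correction terms contribute. The Christoffel correction, projected onto $-\Theta$, equals $-\tfrac{2\e^2}{3}\mathcal Ric_{k+1}(\Pi_p)(\Theta, \Theta)$, while the orthonormalization correction gives $h^\delta(\xi_\alpha, \eta_\alpha) = -\langle \xi_\alpha, \eta_\alpha\rangle\,\Theta$ with $\sum_\alpha \langle \xi_\alpha, \eta_\alpha\rangle = \tfrac{1}{6}\mathcal Ric_{k+1}(\Pi_p)(\Theta, \Theta)$, contributing $+\tfrac{\e^2}{3}\mathcal Ric_{k+1}(\Pi_p)(\Theta, \Theta)$ to the coefficient of $-\Theta$. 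The two partially cancel to yield the net radial correction $-\tfrac{\e^2}{3}\mathcal Ric_{k+1}(\Pi_p)(\Theta, \Theta)$, which after rescaling produces the $-\tfrac{\e}{3}\mathcal Ric_{k+1}(\Pi_p)(\Theta, \Theta)$ coefficient of $n_{\mathcal S}$. The key subtlety, and the reason the numerical factor is $\tfrac{1}{3}$ rather than the naive $\tfrac{2}{3}$, is precisely this partial cancellation between the Christoffel and Gram--Schmidt corrections in the radial direction; keeping careful track of both is the main technical point.
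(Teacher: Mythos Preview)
Your proposal is correct. The transverse ($E_\mu$) component is handled exactly as in the paper, via the Christoffel expansion from Lemma~\ref{ball}. For the radial ($-\Theta$) component, however, you take a genuinely different route: you compute the second fundamental form directly in a $g_\e$-orthonormal frame, splitting the $\e^2$ correction into a Christoffel piece $-\tfrac{2\e^2}{3}\mathcal Ric_{k+1}$ and a Gram--Schmidt (equivalently, inverse-metric) piece $+\tfrac{\e^2}{3}\mathcal Ric_{k+1}$, whose partial cancellation yields the claimed $-\tfrac{\e^2}{3}\mathcal Ric_{k+1}$. The paper instead invokes the hypersurface identity $|H_\Sigma| = -\tfrac{d}{dz}\log\sqrt{\det g_z}$, expands $\det g_{\e z}$ for the sphere of radius $1-z$, and differentiates; this packages both contributions into a single logarithmic derivative and avoids any explicit frame correction. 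The paper's method is shorter and conceptually cleaner for the radial component, while your approach has the virtue of being uniform with the transverse calculation and of making the $\tfrac{1}{3}$-versus-$\tfrac{2}{3}$ cancellation mechanism visible rather than hidden inside the determinant.
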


\begin{proof}
The proof is similar to that of the previous lemma, but with several changes.
 
\medskip

Let $u^1, \ldots, u^k \mapsto \Theta(u^1,\ldots,u^k)$ be a local parametrization of $S^k \subset \Pi_p$. The tangent bundle $TS^k$ is spanned by the vector fields $ \Theta_\alpha = \partial_{u^\alpha} \Theta $. As before, we have
$$ H^g(\mathcal S^k_\e) = \dfrac{1}{\e^2} \, (F_\e)_* \, H^g_\e(S^k) $$
By the Gauss lemma,
$$ g \left( (F_\e)_* \Theta_\alpha, (F_\varepsilon)_*\Theta \right) ( F_\e (\Theta) ) = g_p(\Theta_\alpha, \Theta ) = 0 $$
and
$$ g \left( (F_\e)_* E_\mu, (F_\e)_* \Theta \right) ( F_\e (\Theta) ) = g_p( E_\mu, \Theta ) = 0 $$
this yields
$$ g(\mathcal N_\mu, (F_\e)_* \Theta) = 0 \quad \mbox{and} \quad g_\e(\mathcal N_\mu^\e,\Theta) = 0 $$

Finally we put $ n_{\mathcal S}:= - (F_\e)_* \Theta $. We have 
$$ 
\nabla^{g_\e}_{\partial_{u^\alpha}} \partial_{u^\beta} = \partial_{u^\alpha} \, \partial_{u^\beta} \Theta + (\Gamma^{g_\e})_{ij}^\ell 
(\Theta_\alpha)^i (\Theta_\beta)^j \, E_\ell 
$$
$\alpha, \beta = 1, \ldots, k$, $i,j,\ell = 1, \ldots, m+1$. 

\medskip

The vector field $ \partial_{u^\alpha} \, \partial_{u^\beta} \Theta $ is tangent to $B^{k+1}(\Theta)$, so 
$$ 
g_\e \left( \nabla^{g_\e}_{\partial_{u^\alpha}} \partial_{u^\beta}, \mathcal N^\e_\mu \right) = (\Gamma^{g_\e})_{ab}^\mu \, (\Theta_\alpha)^a \, (\Theta_\beta)^b + \mathcal O(\e^3).
$$
Taking trace in the indices $\alpha, \beta$ with respect to the metric induced on $S^k$ from $g_\e$ we get
$$ 
g_\e( H^{g_\e}(S^k), \mathcal N_\mu) = \frac{ 2 \, \e^2}{3} \, \mathcal Ric_{k+1}^\perp(\Pi_p)(\Theta, E_\mu) + \mathcal O(\e^3).
$$

\medskip

In order to find $ [H^{g_\e}( S^k(\Pi_p) )]^{||} $, recall the standard fact that if $\Sigma \subset M$ is an oriented 
hypersurface with unit inward pointing normal $N_\Sigma$, and if $\Sigma_z$ is the family of hypersurfaces defined by
\[
\Sigma \times \mathbb R (q,z) \mapsto \exp_q( z N_\Sigma(q) ) \in \Sigma_z,
\]
with induced metric $g_z$, then 
\[
\left|H_\Sigma\right| = - \frac{d}{dz} \ \log \sqrt{ \det g_z }.
\]

In our case, considering $S^k(\Pi_p) \subset B^{k+1}(\Pi_p)$ with metric $g_\varepsilon$, let $g_{\varepsilon z}$ be the induced metrics on the
Euclidean sphere of radius $1-z$. Then 
\[
\det{ g_{\varepsilon z} } = (1-z)^{2k} \det{ g^S } \left( 1 - \frac{\varepsilon^2(1-z)^2}{3} { \mathcal Ric_{S^k} } (\Pi_p) (\vec \Theta, \vec \Theta) 
+ \mathcal O(\varepsilon^3) \right), 
\]
where $ g^S $ is the standard spherical metric on $S^k(\Pi_p)$.  From this we deduce that
\[
g_\e \left( H^{g_\e} ( S^k ), - \Theta \right) = \frac{k}{\e} - \frac{\e}{3} \, \mathcal Ric_{k+1} (\Pi_p)( \Theta, \Theta) + \mathcal O(\e^2).
\] 
this completes the proof.
\end{proof}

\begin{proposition} \label{expVol}
Fix $ \Pi_p \in G_{k+1}(TM)$. Then for $\e > 0$ small enough, there exists a {\it minimal} submanifold $Q_{\e} (\Pi_p)$ which is a 
small perturbation of $\mathcal B^{k+1}_\e(\Pi_p)$, whose boundary $ K_\e(\Pi_p) = \partial Q_\e(\Pi_p) $ is a normal graph over 
$ S^k_\e(\Pi_p) $ and whose mean curvature vector field satisfies
\begin{equation} \label{MeanCurvNullTerms}
 H^g (K_\e (\Pi_p) )  - \dfrac{k}{\e} \, n_K = g_p( \vec a, \Theta ) \, n_K + \sum_{ \mu = k+1 }^{m+1} \left( g_p( \vec c_\mu , \Theta )  
+ d_\mu \right) N_\mu
\end{equation}
for some constant vectors $ \vec a = \vec a(\e,\Pi_p)$, $\vec c_\mu = \vec c_\mu(\e,\Pi_p) \in \Pi_p $ and constants 
$ d_\mu = d_\mu(\e,\Pi_p) \in \RR $.  Here $n_K$ is a normal vector field to $K_\e(\Pi_p)$ in $Q_\e(\Pi_p)$ and $N_\mu$ 
form an orthonormal basis of $ \left[ NK_\e (\Pi_p) \right]^\bot$.
\end{proposition}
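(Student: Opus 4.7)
The proof proceeds via a Lyapunov--Schmidt reduction in the rescaled picture. Pass to coordinates $y = x/\varepsilon$ with metric $g_\varepsilon = \varepsilon^{-2} F_\varepsilon^* g$ on $T_pM$, so that $\mathcal{S}_\varepsilon^k(\Pi_p)$ and $\mathcal{B}_\varepsilon^{k+1}(\Pi_p)$ correspond to the unit sphere $S^k$ and ball $B^{k+1}$ in $\Pi_p$; the CMC condition $H^g(K_\varepsilon) = (k/\varepsilon)n_K$ becomes $H^{g_\varepsilon}(K_\varepsilon) = k\, n_K$, and by \eqref{estge} the metric $g_\varepsilon$ differs from the Euclidean one by $O(\varepsilon^2)$. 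For a section $\Phi \in \mathcal{C}^{2,\alpha}(NS^k)$ of small norm, write $K_\Phi$ for the normal graph over $S^k$ by $\Phi$. Since the Euclidean Jacobi operator of $B^{k+1}$ is a vector-valued Laplacian with Dirichlet boundary data and is therefore invertible, the same holds in $g_\varepsilon$ for $\varepsilon$ small, so Lemma~\ref{MinSubman} yields a unique minimal submanifold $Q_\Phi$ with $\partial Q_\Phi = K_\Phi$, depending smoothly on $\Phi$.

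Define the defect operator
$$ \mathcal{N}_\varepsilon(\Phi) := H^{g_\varepsilon}(K_\Phi) - k\, n_{K_\Phi}, $$
viewed as a section of $NK_\Phi$ and identified with a section of $NS^k$ via parallel transport along the normal geodesics. Lemmas~\ref{ball} and \ref{sphere}, combined with the fact that $Q_\Phi$ differs from $\mathcal{B}_\varepsilon^{k+1}$ by an $O(\varepsilon^2 + \|\Phi\|^2)$ correction in $\mathcal{C}^{2,\alpha}$, give $\mathcal{N}_\varepsilon(0) = O(\varepsilon)$ in $\mathcal{C}^{0,\alpha}$. The linearization $D_\Phi \mathcal{N}_\varepsilon(0)$ converges as $\varepsilon \to 0$ to the Euclidean operator $L_{B^{k+1}} = J_{S^k}^{N_K} - k\, D_{B^{k+1}}$ studied at the end of the last section, whose kernel $\mathcal{K} = \mathcal{K}^\parallel \oplus \mathcal{K}^\perp$ was explicitly determined.

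Split $\mathcal{C}^{2,\alpha}(NS^k) = \mathcal{K} \oplus \mathcal{K}^{\perp_{L^2}}$ and let $\pi_\perp$ denote the $L^2$-orthogonal projection onto $\mathcal{K}^{\perp_{L^2}}$. The restriction of $L_{B^{k+1}}$ to $\mathcal{K}^{\perp_{L^2}} \cap \mathcal{C}^{2,\alpha}$ is an isomorphism onto $\mathcal{K}^{\perp_{L^2}} \cap \mathcal{C}^{0,\alpha}$, and by continuity the same holds for $D_\Phi \mathcal{N}_\varepsilon(0)$ when $\varepsilon$ is sufficiently small, with bounds uniform in $\varepsilon$. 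A contraction argument (equivalently, the implicit function theorem) then produces, for each small $\varepsilon$, a unique section $\Phi_\varepsilon \in \mathcal{K}^{\perp_{L^2}}$ with $\|\Phi_\varepsilon\|_{\mathcal{C}^{2,\alpha}} = O(\varepsilon)$ solving
$$ \pi_\perp\, \mathcal{N}_\varepsilon(\Phi_\varepsilon) = 0. $$

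Setting $K_\varepsilon(\Pi_p) := K_{\Phi_\varepsilon}$ and $Q_\varepsilon(\Pi_p) := Q_{\Phi_\varepsilon}$ (returned to the original scale), the remaining defect $\mathcal{N}_\varepsilon(\Phi_\varepsilon)$ lies in $\mathcal{K}$. The explicit characterization of $\mathcal{K}$ recorded at the end of the last section shows that $\mathcal{K}^\parallel$ is spanned by vectors $\phi\, n_K$ with $\phi = g_p(\vec{a},\Theta)$ for some $\vec{a}\in \Pi_p$, while $\mathcal{K}^\perp$ is spanned by $\sum_\mu (d_\mu + g_p(\vec{c}_\mu,\Theta)) N_\mu$ with $d_\mu \in \RR$ and $\vec{c}_\mu \in \Pi_p$; this is exactly the decomposition \eqref{MeanCurvNullTerms}. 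The principal technical difficulty will be establishing invertibility of the linearization on $\mathcal{K}^{\perp_{L^2}}$ with Schauder bounds uniform in $\varepsilon$, since this requires controlling the composition of the boundary operator $J_{S^k}^{N_K} - k D_{B^{k+1}}$ with the $\Phi$-derivative of the minimal-extension map supplied by Lemma~\ref{MinSubman}, as well as keeping track of error terms introduced by the parallel-transport identification of $NK_\Phi$ with $NS^k$.
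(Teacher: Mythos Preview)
Your approach is essentially the same Lyapunov--Schmidt reduction that the paper carries out: rescale, build the minimal filling $Q_\Phi$ for each boundary graph $K_\Phi$ via an implicit function theorem around the Euclidean ball, and then solve $\mathcal N_\varepsilon(\Phi)=0$ modulo the explicit kernel $\mathcal K=\mathcal K^\parallel\oplus\mathcal K^\perp$ of $L_{B^{k+1}}$. Two small points: (i) Lemma~\ref{MinSubman} as stated assumes the initial $Q$ is already minimal, so you should note, as the paper does, that the same argument goes through with the extra parameter $\varepsilon$ because $B^{k+1}$ is minimal for $g_0$ and the Jacobi operator perturbs smoothly; (ii) in the rescaled metric the defect $\mathcal N_\varepsilon(0)$ is actually $O(\varepsilon^2)$, not merely $O(\varepsilon)$ (Lemmas~\ref{ball} and \ref{sphere} are written for $H^g$, and gain a factor of $\varepsilon$ upon rescaling), which yields $\|\Phi_\varepsilon\|_{\mathcal C^{2,\alpha}}=O(\varepsilon^2)$---this sharper bound is what the paper records and is needed downstream in the variational argument and in Theorem~\ref{T1}.
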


\medskip
\begin{proof}
Take a vector field $ \Phi \in \mathcal C^{2,\alpha} (T_p M) $ defined along the unit sphere $S^k(\Pi_p)$, such that
$$ \Phi(\Theta) = - \phi(\Theta) \, \Theta + \sum_{\mu=k+1}^{m+1} \Phi^\mu(\Theta) \, E_\mu, $$
and write
$$ 
S^k_\Phi = \left\{ \Theta + \Phi(\Theta), \ \Theta \in S^k \right\}.
$$
\newline Then there exists a submanifold $B^{k+1}_{\e,\Phi}$ such that $\partial B^{k+1}_{\e,\Phi} = S^k_\Phi$ and which is minimal 
with respect to $g_\e$. The proof of this fact is almost the same as the proof of the Lemma \eqref{MinSubman}; the only difference 
is that we use a "perturbed" metric and the starting submanifold is no longer minimal. Let $V_\Phi$ be a linear extension of $\Phi$ 
in $B^{k+1}$ and take 
$$ 
W \in \mathcal C^{2,\alpha}(T_pM), \quad W = \sum_{\mu = k+1}^{m+1} W^\mu \, E_\mu, \quad \left. W  \right|_{S^k} = 0.
$$
We let $ H(\e, \Phi, W) $ denote the mean curvature with respect to the metric $g_\e$ of the submanifold 
$ \{ U(y) = V_\Phi(y) + W(y), \ y \in B^{k+1} \} $. Note that $ H(0,0,0) = 0 $ and 
$$ 
\left. D_3 H \right|_{(0,0,0)} = J_{B^{k+1}} = \Delta_{B^{k+1}}. 
$$
We can then apply the implicit function theorem to $\hat H(\e,\Phi,W) = \pi\circ H(\e,\Phi,W)$, where $\pi$ is the orthogonal
projection onto the vertical subspace of $T_pM$, which is spanned by $E_\mu$, $ k+1 \geq \mu \leq m+1  $. 
Then for $\e$ and $ \| \Phi \|_{\mathcal C^{2,\alpha}} $ small enough, there exists a mapping $ (\e, \Phi) \mapsto W(\e,\Phi) $ 
such that 
$$ 
\hat H(\e,\Phi,W(\e,\Phi)) = 0 \quad \mbox{and} \quad H(\e,\Phi,W(\e,\Phi)) = 0.
$$
Moreover, 
\[
U_{\e,\Phi} = V_\Phi + W(\e,\Phi) = V_\phi + Z_\Phi + W_\e + \mathcal O(\| \e^3 \|) + \mathcal O( \e^2 \| \, \Phi \| ) + \mathcal O(\| \Phi^2 \|)
\]
where $ V_\phi(y) = - \phi(y/\| y \|) \, y $, the vector field $Z_\Phi$ is the harmonic extansion of $\Phi$ in $B^{k+1}$ and $W_\e$ satisfies
 $$ \Delta_{B^{k+1}} \, W_\e^\mu = \dfrac{ 2 \, \e^2}{3} \, \mathcal Ric_{k+1}^\bot(\Pi_p)(y,E_\mu), \quad W_\e = 0 \quad \mbox{on} \quad S^k $$
 
\begin{remark}
A simple calculation shows that
\[
W_\e(y) = - \, \dfrac{\e^2}{3} \, \dfrac{1}{k+3} \,(1-|y|^2) \sum \limits_{\mu=k+1}^{m+1} \mathcal Ric_{k+1}^\bot(\Pi_p)(y,E_\mu) \, E_\mu.
\]
\end{remark}

\medskip
For the second step, we calculate the mean curvature of $ S^k_\Phi $ with respect to the metric $g_\e$. First note that the vector fields
$$ 
\tau_\alpha = (1 - \phi) \, \Theta_\alpha - \partial_{u_\alpha} \phi \, \Theta + \sum_{\mu=k+1}^{m+1} \partial_{u_\alpha} \Phi \, E_\mu
$$
locally frame $TS^k_\Phi $, while 
$$ 
\Theta_\Phi = \Theta + \dfrac{1}{1-\phi} \, \nabla_{S^k} \phi, \quad \mbox{and} \quad (E_\mu)_\Phi = E_\mu - \dfrac{1}{1-\phi} \, 
\nabla_{S^k} \Phi^\mu
$$
are a local basis for the normal bundle of $S^k_\Phi$ with respect to the Euclidean metric. Applying the Gram-Schmidt process 
with respect to the metric $g_\e$ to these local frames yields the unit normal to $S^k_\Phi$ in $ B^{k+1}_{\e,\Phi}$, which we
denote $ n_\Phi^\e$, and the orthonormal frame $(\mathcal N_\Phi)_\mu^\e $ for the normal bundle of $B^{k+1}_{\e,\Phi}$ along 
$S^k_\Phi$ with respect to $g_\e$. These calculations show that 
\begin{multline*} 
 \left\langle n^\e_\Phi, - \Theta_\Phi / | \Theta_\Phi |_{g_{eucl}} \right\rangle_{g_\e} = 1 + \mathcal O(\e^2) \\[3mm] 
 \left\langle (\mathcal N_\mu)_\Phi, (E_\mu)_\Phi^\e / | (E_\mu)_\Phi |_{g_{eucl}} \right\rangle_{g_\e} = 1 + \mathcal O(\e^2),
\end{multline*}
and $ n^\e_0 = - \Theta $ and $ (\mathcal N_\mu)^\e_0 = \mathcal N_\mu^\e $. We can then write
\begin{multline*}
H^{g_\e}(S^k_\Phi) - k \, n_\Phi  \\ 
= \left( g_\e \left( H^{g_\e}(S^k_\Phi), n_\Phi^\e \right) - k \right) \, n_\Phi^\e + \sum \limits_{\mu=k+1}^{m+1} 
g_\e \left( H^{g_\e}(S^k_\Phi), ( \mathcal N_\Phi)_\mu^\e \right) \, ( \mathcal N_\Phi)_\mu^\e.
\end{multline*}
\begin{notation}
We let $L_{\Pi_p} (\Phi) $ denote any second order linear differential operator acting on $\Phi$. The coefficients of 
$L_{\Pi_p}(\Phi)$ may depend on $ \Pi_p \in G_{k+1}(TM) $ and $ \e \in (0,1) $, but for all $ j \in \mathbb N $ there exists a constant 
$ C_j > 0 $ independent of $\Pi_p$ and $\e$ such that
$$ 
\| L_{\Pi_p} (\Phi) \|_{\mathcal C^{j,\alpha}(S^k)} \leq C_j \, \| \Phi \|_{\mathcal C^{j+2,\alpha}(NS^k)}.
$$
Similarly, for $\ell \in \mathbb N$, $Q^\ell_{\Pi_p}(\Phi) $ denotes some nonlinear operator in $\Phi$, depending also on $\Pi_p$ 
and $\e$, such that $Q_{\Pi_p}^\ell(0) = 0 $ and which has the following properties. The coefficients of the Taylor expansion of 
$Q_{\Pi_p}^\ell(\Phi)$ in powers of the components of $\Phi$ and  
its derivatives satisfy that for any $j \geq 0$, there exists a constant $C_j > 0$, independent of $ \Pi_p \in G_{k+1}(TM) $ and 
$\e \in (0,1)$, 
\begin{multline*}
\| Q_{\Pi_p}^\ell(\Phi_1) - Q_{\Pi_p}^\ell(\Phi_2) \|_{\mathcal C^{j,\alpha}(S^k)} \leq \\
c \, \left( \| \Phi_1 \|_{\mathcal C^{j+2,\alpha}(NS^k)} + \| \Phi_2 \|_{\mathcal C^{j+k,\alpha}(NS^k) } \right)^{\ell-1} \| \Phi_1 - \Phi_2 \|_{\mathcal C^{j+k,\alpha}(NS^k)}
\end{multline*}
provided $\| \Phi_i \|_{\mathcal C^1(NS^k)} \leq 1$, $i = 1,2 $.
\end{notation}

\medskip

Using that the Christoffel symbols of the metric $ g_\e $ are of order $\mathcal O(\e^2)$, we obtain
\begin{equation*}
\begin{split}
g_\e \left( H^{g_\e}(S^k_\Phi), n_\Phi^\e \right) - k & = - \mathcal Ric_{k+1}(\Pi_p)(\Theta,\Theta) + J_{S^k}^\parallel \phi \\
 & + \mathcal O(\e^3) + \e^2 \, L_{\Pi_p}(\Phi) + Q_{\Pi_p}^2(\Phi), \\
g_\e \left( H^{g_\e}(S^k_\Phi), (\mathcal N_\Phi)_\mu^\e \right) & = - \mathcal Ric_{k+1}^\perp(\Pi_p)(\Theta,E_\mu) + \left( J_{S^k}^\bot - D_{B^{k+1}} \right) \, \Phi^\mu  \\ & + \mathcal O(\e^3) + \e^2 \, L_{\Pi_p}(\Phi) + Q_{\Pi_p}^2(\Phi).
\end{split}
\end{equation*}

\medskip
 
As before, we let $ \mathcal K^{\parallel} $ and $ \mathcal K^\bot $ be the null-spaces of the operators 
$$ 
J_{S^k}^\parallel = \Delta_{S^k} + k \quad \mbox{and} \quad L_{B^{k+1}}^\bot = \Delta_{S^k} - D_{B^{k+1}}
 $$
and write $ \mathcal P^\parallel $ and $\mathcal P^\bot$ for the orthogonal complements of $\mathcal K^\parallel$ and 
$\mathcal K^\bot$ in $L^2(S^k)$.  Define
\begin{equation}
\mathfrak E_{\e,\Pi_p}:= T_pM \times (T_pM \oplus \mathbb R)^{m-k} \times \mathcal P^\parallel \times (\mathcal P^{\bot})^{m-k}
\label{frakE}
\end{equation}
There exists an operator 
$$ 
\mathcal G_{\e,\Pi_p}: (\mathcal C^{0,\alpha}(S^k) )^{m-k} \longrightarrow \mathfrak E_{\e,\Pi_p}
$$
such that 
\begin{multline*}
\mathcal G_{\e,\Pi_p} (f_0,f_1,\ldots, f_{m-k}) \\= \left( \vec a(\e,\Pi_p,f), \vec c_\mu(\e,\Pi_p,f), d_\mu(\e,\Pi_p,f), \phi(\e,\Pi_p,f), \Phi^\bot(\e,\Pi_p,f) \right)
\end{multline*}
is the solution to
$$ \left\{ \begin{array}{l} J_{S^k}^\parallel \, \phi = g_p(\vec a, \Theta) + f_0 \\[3mm]
 L_{B^{k+1}}^\bot \, \Phi^\mu = g_p(\vec c_\mu, \Theta) + d_\mu + f_{\mu-k}.
 \end{array} \right. $$
Applying a standard fixed point theorem for contraction mappings, we find that there exist constants $ c \in \mathbb R$ and 
$ \e_0 \in (0,1) $ such that for every $\e \in (0,\e_0) $ and $ \Pi_p \in G_{k+1}(TM) $ there is a unique
$$ 
\left( \vec a(\e,\Pi_p), \vec c_\mu(\e,\Pi_p), d_\mu(\e,\Pi_p), \phi(\e,\Pi_p), \Phi^\bot_{\e,\Pi_p} \right) \in \mathfrak E_{\e,\Pi_p}.
$$
(the indices are suppressed for simplicity) which belongs to a closed ball of radius $ c \, \e^2 $ in $\mathfrak E_{\e,\Pi_p}$ and such that 
$$
H^{g_\e}(S^k_{\Phi}) = - k \, n^\e_{\Phi} + g_p(\vec a, \Theta) \, n^\e_{\Phi} + \sum_{\mu=k+1}^{m+1} \, \left( g_p(\vec c_\mu, \Theta) + 
d_\mu \right) \, ( \mathcal N_{\Phi} )^\e_\mu.
$$
Putting
$$ n_K = (F_\e)_* \, n^\e_\Phi \quad \mbox{and} \quad N_\mu = (F_\e)_* \, ( \mathcal N_\mu )_\Phi^\e $$
and taking $K_\e(\Pi_p) := F_\e( S^k_{\Phi(\e,\Pi_p)} )$ and $ Q_\e(\Pi_p):= F_\e(B^{k+1}_{\e,\Phi(\e,\Pi_p)})  $ finishes the proof.
\begin{remark}
 Notice that 
 \begin{center} \scalebox{0.9}{ $ \mathcal Ric_{k+1}(\Pi_p)(\Theta,\Theta) \in \mathcal P^\parallel \quad \mbox{and} \quad \mathcal Ric_{k+1}^\bot(\Pi_p)(\Theta,E_\mu) \in \mathcal K^\bot $}
 \end{center}
Moreover, it was remarked in \cite{Pac-Xu} that
\[
\begin{array}{ll} \mathcal Ric_{k+1}(\Pi_p)(\Theta,\Theta) & = \sum \limits_{a=1}^{k+1} \mathcal Ric_{k+1}(\Pi_p)_{aa} \, (\Theta^i)^2 + 
  \sum \limits_{a \neq b=1}^{k+1} \mathcal Ric_{k+1}(\Pi_p)_{ab} \, \Theta^a \, \Theta^b \\[5mm] 
 & = \frac{1}{k+1} \mathcal R_{k+1}(\Pi_p) + \breve{{\mathcal Ric}}_{k+1}(\Pi_p)(\Theta,\Theta) \end{array}
\]
where $ \breve{{\mathcal Ric}}_{k+1}(\Pi_p)(\Theta,\Theta)$ belongs to the eigenspace of $\Delta_{S^k}$ associated to the eigenvalue 
$2(k+1)$. Using this, one can easily verify that
\[
\phi_{\e,\Pi_p}(\Theta) = - \frac{\e^2}{3} \, \left( \frac{2}{k(k+2)} \, \mathcal R_{k+1}(\Pi_p) - \frac{1}{k+2} \, \mathcal Ric_{k+1}(\Pi_p)(\Theta,\Theta) \right) \, \Theta + \mathcal O(\e^3), 
\]
\[
[\Phi]^\perp_{\e,\Pi_p} = \mathcal O(\e^3).
\]
\end{remark}
\end{proof}

\subsection{The variational argument}
We now employ a variational argument to prove that one can choose $\Pi_p \in G_k(M)$ in such a way
that the submanifold $K_\e(\Pi_p)$ obtained in the previous Proposition has constant mean curvature. 

\medskip

To state our result, we introduce the following restrictions of the Riemann tensor of $M$:

\begin{equation*}
\begin{split}
& R_{k+1}(\Pi_p)(v_1,v_2,v_3,v_4) = g_p(R_p(v_1,v_2)v_3,v_4), \qquad v_1, v_2, v_3, v_4 \in \Pi_p, \\
& R_{k+1}^\bot(\Pi_p)(v_1,v_2,v_3,N) = g_p(R_p(v_1,v_2)v_3,N), \qquad v_1, v_2, v_3 \in \Pi_p,  \quad N \in \Pi_p^\bot, 
\end{split}
\end{equation*}

\medskip

Finally, introduce the function $\mathbf r$ on $G_{k+1}(TM)$:
\[
\begin{array}{ll} 
\mathbf r( \Pi_p ) & = \frac{1}{ 36(k+5) } \Big( 8 \, \| \mathcal Ric_{k+1}(\Pi_p) \|^2 - 18 \, \Delta_{k+1}^g \mathcal R_{k+1}(\Pi_p) 
- 3\, \| R_{k+1}(\Pi_p) \|^2  \\[3mm] & + 5 \, \mathcal R^2_{k+1}(\Pi_p) + 8 \, \| \mathcal Ric_{k+1}^\bot(\Pi_p) \|^2 + 12 \, \| 
R^\bot_{k+1}(\Pi_p) \|^2 \Big) \\[3mm] 
& +\frac{1}{9(k+2)} \Big( \frac{k+6}{k} \, \mathcal R_{k+1}^2(\Pi_p) - 2 \, \| \mathcal Ric_{k+1}(\Pi_p) \|^2 \Big)  \\[3mm]
& - \frac {4 k}{3(k+3)(k+5)} \, \| \mathcal Ric_{k+1}^\bot(\Pi_p) \|^2 
\end{array}  
\]
where $ \Delta_{k+1}^g T(\Pi_p) = \sum_{i=1}^{k+1} \, \nabla_{E_i}^2 T(p) $, for any tensor $T$ on $M$.

\medskip

Now consider the energy $\mathcal E_{\varepsilon}$ restricted to this finite dimensional space of submanifolds,
\[
\mathcal E_\varepsilon (\Pi_p) := {\rm Vol}_k (K_\varepsilon (\Pi_p)) - \frac{k}{\varepsilon} \, {\rm Vol}_{k+1} (Q_\varepsilon (\Pi_p)),
\]
which is a function on $G_{k+1}(TM)$.  Tracing through the construction of $K_\varepsilon (\Pi_p)$ one obtains the
relationship of this function to the curvature functions defined above.
\begin{lemma} \label{expEner}
There is an expansion
\medskip
\[
\frac{(k+1) \, \mathcal E_\varepsilon (\Pi_p) }{ \e^k \, \mathrm{Vol}(S^k)} = \left( 1 -  \frac{ \e^2 }{2(k+3)} \, 
\mathcal R_{k+1} (\Pi_p) + \frac{\e^4}{2(k+3)} \, \mathbf r(\Pi_p) + \mathcal O(\e^5) \right)
\]
\end{lemma}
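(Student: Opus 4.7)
The plan is to rescale via the map $F_\e(y) = \exp_p(\e y)$ and the metric $g_\e = \e^{-2}F_\e^*g$, under which volumes obey $\text{Vol}^g(F_\e(\Sigma)) = \e^{\dim\Sigma}\text{Vol}^{g_\e}(\Sigma)$, so
\[
\mathcal E_\e(\Pi_p) = \e^k\bigl(\text{Vol}_k^{g_\e}(S^k_\Phi) - k\,\text{Vol}_{k+1}^{g_\e}(B^{k+1}_{\e,\Phi})\bigr).
\]
I split this as $\mathcal E_\e^0 + \mathcal E_\e^1$, where $\mathcal E_\e^0$ is the same functional evaluated on the unperturbed pair $(S^k,B^{k+1})\subset\Pi_p$ with respect to $g_\e$, and $\mathcal E_\e^1$ records the effect of the perturbation $\Phi = -\phi\,\Theta + [\Phi]^\perp$ supplied by Proposition \ref{expVol} (with $\phi = \mathcal O(\e^2)$ and $[\Phi]^\perp = \mathcal O(\e^3)$, per the concluding Remark).

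For $\mathcal E_\e^0$ the inputs are the standard normal-coordinate expansion
\[
g_{ij}(x) = \delta_{ij} + \tfrac{1}{3}R_{ikjl}x^k x^l + \tfrac{1}{6}(\nabla_m R_{ikjl})x^k x^l x^m + \bigl(\tfrac{1}{20}\nabla_m\nabla_n R_{ikjl} + \tfrac{2}{45}R^a{}_{ikm}R_{ajln}\bigr)x^k x^l x^m x^n + \mathcal O(|x|^5),
\]
together with the Taylor expansions of $\sqrt{\det(g|_{\Pi})}$ and $\sqrt{\det(g|_{T_\Theta S^k})}$ through order $|x|^4$. Setting $x = \e y$ and integrating in polar coordinates reduces everything to the classical moment identities
\[
\int_{S^k}\!\Theta^a\Theta^b\,d\Theta = \tfrac{\mathrm{Vol}(S^k)}{k+1}\,\delta^{ab},\qquad \int_{S^k}\!\Theta^a\Theta^b\Theta^c\Theta^d\,d\Theta = \tfrac{\mathrm{Vol}(S^k)}{(k+1)(k+3)}\bigl(\delta^{ab}\delta^{cd}+\delta^{ac}\delta^{bd}+\delta^{ad}\delta^{bc}\bigr),
\]
while odd monomials, and in particular the contribution of $\nabla R$, integrate to zero. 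Performing the resulting contractions identifies the scalar invariants $\mathcal R_{k+1}$, $\|\mathcal Ric_{k+1}\|^2$, $\|R_{k+1}\|^2$, and $\Delta_{k+1}^g\mathcal R_{k+1}$, and the combination $\text{Vol}_k^{g_\e}(S^k) - k\,\text{Vol}_{k+1}^{g_\e}(B^{k+1})$ collapses the $\e^2$ coefficient to the claimed $-\mathcal R_{k+1}/(2(k+3))$ while producing the first two lines of $\mathbf r(\Pi_p)$ at order $\e^4$.

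For $\mathcal E_\e^1$ the key observation is that the flat pair $(S^k, B^{k+1})$ is critical for $\mathcal E_k$ in the Euclidean metric, so the Euclidean first variation of $\text{Vol}_k - k\,\text{Vol}_{k+1}$ along $\Phi$ vanishes identically. The surviving $\e^4$-level contributions come from two sources: (i) the $\mathcal O(\e^2)$ corrections to $H^{g_\e}(S^k)$ and $H^{g_\e}(B^{k+1})$ recorded in Lemmas \ref{ball} and \ref{sphere}, paired via the first-variation formula of Section 2 with $\Phi = \mathcal O(\e^2)$; and (ii) the Euclidean second variation, governed by the Jacobi operators $J_{S^k}^\parallel = \Delta_{S^k}+k$ and $L_{B^{k+1}}^\perp = \Delta_{S^k}-D_{B^{k+1}}$ computed in Section 2.5, applied to $\phi_{\e,\Pi_p}$ and $[\Phi]^\perp_{\e,\Pi_p}$. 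Inserting the explicit formula
\[
\phi_{\e,\Pi_p}(\Theta) = -\tfrac{\e^2}{3}\!\left(\tfrac{2\mathcal R_{k+1}(\Pi_p)}{k(k+2)} - \tfrac{\mathcal Ric_{k+1}(\Pi_p)(\Theta,\Theta)}{k+2}\right) + \mathcal O(\e^3)
\]
from the Remark after Proposition \ref{expVol} and integrating against the moment identities above produces the remaining $\|\mathcal Ric_{k+1}^\perp\|^2$ and $\|R_{k+1}^\perp\|^2$ terms in $\mathbf r$. Summing $\mathcal E_\e^0$ and $\mathcal E_\e^1$ and dividing by $\mathrm{Vol}(S^k)/(k+1)$ yields the stated identity. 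The main obstacle is bookkeeping: $\mathbf r(\Pi_p)$ is a linear combination of six independent curvature invariants, each arising from a specific cross-term in the expansions above, so the core task is to exhibit the precise cancellations between the $\text{Vol}_k$ and $k\,\text{Vol}_{k+1}$ pieces (and between $\mathcal E_\e^0$ and $\mathcal E_\e^1$) that realign the coefficients with those defining $\mathbf r$. The calculation is routine but long, which is presumably why the authors defer the details to an appendix.
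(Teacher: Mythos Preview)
Your overall plan matches the paper's: expand the metric in normal coordinates to fourth order, compute the induced volume forms on the perturbed sphere and ball, and integrate using the spherical moment identities. The organizational split $\mathcal E_\e = \mathcal E_\e^0 + \mathcal E_\e^1$ is fine. But your accounting of where the various curvature invariants originate has a genuine gap.

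First, a misattribution: the invariants $\|R_{k+1}^\perp\|^2$ and $\|\mathcal Ric_{k+1}^\perp\|^2$ appearing in the first block of $\mathbf r$ already arise in $\mathcal E_\e^0$, not in $\mathcal E_\e^1$. They come from the $\tfrac{2}{45}\sum_\ell R_{i\cdot k\ell}R_{j\cdot l\ell}$ term in the metric expansion, where the summation index $\ell$ runs over \emph{all} of $1,\dots,m+1$, including the normal directions $\mu>k+1$. Your list of invariants produced by $\mathcal E_\e^0$ omits these. Conversely, your sources (i) and (ii) for $\mathcal E_\e^1$ cannot produce any $\perp$-invariants at order $\e^4$: the explicit $\phi_{\e,\Pi_p}$ involves only $\mathcal R_{k+1}$ and $\mathcal Ric_{k+1}$, and $[\Phi]^\perp=\mathcal O(\e^3)$ contributes only at $\mathcal O(\e^5)$ or higher through either the first variation or the second variation via $L_{B^{k+1}}^\perp$.

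Second, and more seriously, you have overlooked the interior deformation $W_\e$ of the ball. The submanifold $Q_\e(\Pi_p)$ is not the flat $B^{k+1}$ but the \emph{minimal} ball $B^{k+1}_{\e,\Phi}$, obtained by adding a normal section $W_\e=\mathcal O(\e^2)$ which vanishes on $S^k$ and solves $\Delta_{B^{k+1}}W_\e^\mu=\tfrac{2\e^2}{3}\mathcal Ric_{k+1}^\perp(y,E_\mu)$ (see the Remark inside the proof of Proposition~\ref{expVol}). This interior correction is invisible to the boundary data $\Phi$ and to the operators $J_{S^k}^\parallel$, $L_{B^{k+1}}^\perp$ of Section~2.5, yet it changes ${\rm Vol}_{k+1}$ at order $\e^4$: the pairing $\int_{B^{k+1}}g_\e(H^{g_\e}(B^{k+1}),W_\e)$ and the second-variation term $\int_{B^{k+1}}W_\e^\mu\Delta W_\e^\mu$ are both $\mathcal O(\e^4)$. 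In the paper's appendix these are computed explicitly and produce precisely the final term $-\tfrac{4k}{3(k+3)(k+5)}\|\mathcal Ric_{k+1}^\perp\|^2$ in $\mathbf r$. Without $W_\e$ your expansion will simply be missing this term.
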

\begin{proof}
The proof is a technical calculation, contained in the Appendix.
\end{proof}

\medskip

The main result of this section is the following proposition 
\begin{proposition} \label{CritPoint}
If $\Pi_p$ is a critical point of $\mathcal E_\e$, then $ K_\e(\Pi_p) $ has constant mean curvature.
\end{proposition}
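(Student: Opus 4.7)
The plan is to exploit the first variation formula for $\calE_{h_0}$ with $h_0 = k/\varepsilon$. Since $Q_\varepsilon(\Pi_p)$ is minimal, varying $\Pi_p$ along a curve in $G_{k+1}(TM)$ with induced variation vector field $\Xi$ along $K_\varepsilon := K_\varepsilon(\Pi_p)$ yields
\[
d\calE_\varepsilon|_{\Pi_p}(\delta\Pi) = -\int_{K_\varepsilon} g\bigl(H^g(K_\varepsilon) - (k/\varepsilon)\, n_K,\, \Xi\bigr)\, d\mathrm{vol}_K.
\]
Substituting the explicit form of the defect from \eqref{MeanCurvNullTerms}, this becomes
\[
-\int_{K_\varepsilon} \Bigl[ g_p(\vec a,\Theta)\, g(n_K,\Xi) + \sum_\mu \bigl(g_p(\vec c_\mu,\Theta) + d_\mu\bigr)\, g(N_\mu,\Xi) \Bigr]\, d\mathrm{vol}_K.
\]
The goal is to show that the coefficients $\vec a$, $\vec c_\mu$, $d_\mu$ — which by construction exhaust the approximate kernel $\calK = \calK^{\parallel} \oplus \calK^{\perp}$ of the linearized operator at $S^k$ — are forced to vanish by the critical point condition.

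Next I would exhibit three distinguished families of variations in $G_{k+1}(TM)$ that together span $T_{\Pi_p}G_{k+1}(TM)$: (a) translating the basepoint by $v = E_b \in \Pi_p$, which gives (via the family of parallel-transported planes) $\Xi \approx E_b$ along $K_\varepsilon$; (b) translating by $v = E_\mu \in \Pi_p^\perp$, giving $\Xi \approx E_\mu$; and (c) rotating $\Pi_p$ in the $E_b$-$E_\mu$ plane, which gives $\Xi \approx \varepsilon\, \Theta^b E_\mu$. A dimension count confirms that these account for all of $T_{\Pi_p}G_{k+1}(TM) = (m+1) + (k+1)(m-k)$, matching exactly $\dim\calK$.

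Plugging these into the integral and using the leading-order identifications $n_K \approx -\Theta$, $N_\mu \approx E_\mu$, together with the orthogonality relations
\[
\int_{S^k} \Theta^a\, d\mathrm{vol} = 0, \qquad \int_{S^k} \Theta^a \Theta^b\, d\mathrm{vol} = \frac{\mathrm{Vol}(S^k)}{k+1}\,\delta_{ab},
\]
produces a linear system on $(\vec a, \vec c_\mu, d_\mu)$ whose leading-order matrix is block-diagonal with entries $\tfrac{\mathrm{Vol}(S^k)\, \varepsilon^k}{k+1}\, a^b$, $\mathrm{Vol}(S^k)\,\varepsilon^k d_\mu$, and $\tfrac{\mathrm{Vol}(S^k)\,\varepsilon^{k+1}}{k+1}\, c_\mu^b$ respectively. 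All remaining contributions are smaller by a factor $\calO(\varepsilon^2)$. Hence this map is invertible for $\varepsilon$ sufficiently small, so $d\calE_\varepsilon|_{\Pi_p} = 0$ forces $\vec a = 0$, $\vec c_\mu = 0$ and $d_\mu = 0$; combined with the already-enforced minimality of $Q_\varepsilon(\Pi_p)$, this is precisely Definition~\ref{def:1} with constant mean curvature $k/\varepsilon$.

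The main technical obstacle is justifying that the induced variation vector field $\Xi$ along $K_\varepsilon$ really does agree, to leading order, with the naive geometric motion of the geodesic sphere described above. This requires checking that the correction $\Phi_{\varepsilon,\Pi_p}$ and interior adjustment $W(\varepsilon,\Pi_p)$ from Proposition~\ref{expVol} depend smoothly on $\Pi_p$ with derivatives of size $\calO(\varepsilon^2)$, so that their contribution to $\Xi$ is subleading and does not destroy invertibility of the reduced linear system. This follows from differentiating the implicit function theorem construction in Proposition~\ref{expVol} with respect to $\Pi_p$ and using the uniform $\calO(\varepsilon^2)$ bounds already established there.
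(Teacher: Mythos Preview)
Your proposal is correct and follows essentially the same approach as the paper: both arguments use the first variation formula for $\calE_{k/\e}$ restricted to the family $\Pi_p \mapsto (Q_\e(\Pi_p), K_\e(\Pi_p))$, test against the same three classes of variations (basepoint translations in $\Pi_p$, basepoint translations in $\Pi_p^\perp$, and rotations of $\Pi_p$ toward $\Pi_p^\perp$), and use the leading-order identifications $n_K \approx -\Theta$, $N_\mu \approx E_\mu$, $\Xi \approx$ (naive geometric motion) together with the spherical integrals $\int_{S^k}\Theta^a\Theta^b = \tfrac{\mathrm{Vol}(S^k)}{k+1}\delta_{ab}$ to force $\vec a, \vec c_\mu, d_\mu$ to vanish for small $\e$. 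The only stylistic difference is that the paper writes out explicit chained inequalities (culminating in an estimate of the form $X \leq c\e^2 X$ for the sum of the norms of the parameters), whereas you phrase the same content as invertibility of a nearly block-diagonal linear system with $\calO(\e^2)$ off-diagonal corrections; these are equivalent.
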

\begin{remark}
Theorems \eqref{T1} and \eqref{T2} are Corollaries of Proposition \eqref{CritPoint}. Indeed, if we define
\begin{equation}
\Psi(\e,\Pi_p) = 2 \, \e^{-2} \, (k+3) \left( 1 - (k+1)\dfrac{\mathcal E_\e(\Pi_p) }{\e^k\mathrm{Vol}(S^k)} \, \right).
\label{defPsi}
\end{equation}
then for any $j \geq 0$, there exists a constant $C_j$ which is independent of $\e$ such that 
$$ 
\| \Psi(\cdot, \e) - \mathcal R_{k+1} + \mathcal \e^2 \mathbf r(\Pi_p) \|_{\mathcal C^j(G_{k+1}(TM))} \leq C_j \, \e^3; 
$$
\end{remark}

\begin{proof}[Proof of the Proposition]
Let $\Pi_p$ be a critical point of $\mathcal E_\e$.  We show that the parameters $\vec a$, $\vec c$ and $d$ must
then necessarily vanish.  We do this by considering the various types of perturbations of $\Pi_p$.

First consider the perturbations in $G_{k+1}(M)$ which correspond to 
parallel translations of $\Pi_p$.  In other words, we suppose that the family of planes $\Pi_{\mathrm {\exp}_p( t \xi )}$ in $G_{k+1}(M)$ 
are parallel translates of $\Pi_p$ along the geodesic $ \exp_p(t \xi)$.
  
\medskip
  
The submanifold $K_\varepsilon(\Pi_{\mathrm {exp}_p( t\vec \xi )})$ is a normal graph over $K_\e(\Pi)$ by a vector field $\Psi_{\e, \Pi_p, \xi, t} $
which depends smoothly on $t$. This defines a vector field on $K_\varepsilon (\Pi_p)$ by 
$$ 
Z_{\e, \Pi_p,  \xi} = \left. \partial_t \Psi_{\e,\Pi_p, \xi,t}\right|_{t=0}. 
$$
  
\medskip
  
The first variation of the volume formula yields
\begin{equation} \label{d} \begin{array}{rl}
0 & = D\mathcal E_\varepsilon|_{\Pi_p}(\xi) \\[3mm]
& = \int_{K_\varepsilon(\Pi_p)} \Big( g( H(K_\e(\Pi_p)),Z_{\e, \Pi_p, \xi} ) - \frac{k}{\e} \, g(n, Z_{\e, \Pi_p, \xi}) \Big) \, dvol_{K_\e(\Pi_p)} \\[3mm] 
& \qquad - \displaystyle \frac{k}{\e} \int_{Q_\e(\Pi_p)} g( H(Q_\e (\Pi_p)), Z_{\e, \Pi_p, \xi} ) \ dvol_{Q_\e(\Pi_p)},
\end{array}
\end{equation}
and then the construction of $Q_\varepsilon (\Pi_p)$ and $K_\varepsilon (\Pi_p)$ gives that
\begin{multline*}
\int\Big( g_p( \vec a, \Theta) g(n,Z_{\e,\Pi_p, \xi}) \\
+ \sum \limits_{\mu=k+1}^{m+1} \left( g_p( \vec c_\mu, \Theta) + d_\mu \right) g(Z_{\e, \Pi_p, \xi},N_\mu) \Big) \ dvol_{K_\e (\Pi_p)} = 0.
\end{multline*}

Let $ \Xi$ be the vector field obtained by parallel transport of $\xi$ along geodesics issuing from $p$, and suppose that $c$
is a constant independent of $\varepsilon$ and $ \xi$. Then 
$$ 
\| Z_{\e, \Pi_p, \xi} - \Xi \|_g \leq c \, \e^2 \, \| \xi \|.
$$

\medskip

By construction of $K_\e(\Pi_p)$, we have 
$$ 
\| n + (F_\e)_* \Theta \|_g \leq c \, \varepsilon^2, \quad \mbox{and} \quad \| N_\mu - (F)_* E_\mu \|_g \leq c \, \varepsilon^2.
$$
    
\medskip

Now take $ \xi \in \Pi_p \subset TM_p $, so that 
$$ 
g(n,Z_{\e,\Pi_p, \xi}) = g \left( -(F_\e)_* \Theta + \left( n + (F)_* \Theta \right), \ \Xi + \left( Z_{\varepsilon,\Pi_p, \xi} - \Xi  \right) \right), 
$$
and
$$ 
g( N_\mu,Z_{\e, \Pi_p, \xi} ) = g \left( (F_\e)_*E_\mu + \left( N_\mu - (F_\e)_*\vec E_\mu, \ \Xi + \left( Z_{\e,\Pi_p, \xi} - \Xi \right) \right) \right). 
$$

\medskip

Using the expansion of $g$ near $p$, we conclude that
$$ 
\left| g(n,Z_{\e,\Pi_p, \xi}) + g_p( \xi,\Theta) \right| \leq c \, \e^2 \| \xi \|, \quad \mbox{and} \quad \left| g( N_\mu, Z_{\e, \Pi_p, \xi} ) \right| 
\leq c \, \e^2 \| \xi \|,
$$
hence 
\[
\begin{array}{rl} 
\displaystyle \int_{K_\varepsilon (\Pi_p)} & g_p( \vec a, \Theta ) g_p( \xi, \Theta)  \\  & 
\leq \Big| \displaystyle \int_{K_\e (\Pi_p)} g_p( \vec a, \Theta) g_p( \xi, \Theta) + \displaystyle \int_{K_\varepsilon (\Pi_p)} g_{p}( \vec a,\Theta) \, 
g(Z_{\varepsilon,\Pi_p,\xi}, n ) \\[5mm] 
& \qquad + \sum \limits_{\mu=k+1}^{m+1} \displaystyle \int_{K_\e (\Pi_p)} \left( g_p( \vec c_\mu, \Theta ) + d_\mu \right) g(Z_{\varepsilon, \Pi_p, \xi}, N_\mu) \Big| \\[5mm]
& \left. \leq c \, \e^2 \, \| \xi \| \Big( \displaystyle \int_{K_\e (\Pi_p)} \left| g_p (\vec a,\Theta) \right| \right. + \sum \limits_{\mu=k+1}^{m+1} \displaystyle \int_{K_\e (\Pi_p)} \left| g_p(\vec c_\mu,\Theta) + d_\mu \right| \Big) 
\end{array}
\]
Now let $ \xi = \vec a $, so that 
\begin{multline*}
\int_{K_\e (\Pi_p)} \left| g_p( \vec a,\Theta) \right|^2  \\
\leq c \, \e^2 \| \vec a \|  \left( \int_{K_\e (\Pi_p)} \left| g_p(\vec a, \Theta) \right| + \sum \limits_{\mu = k+1}^{m+1} \int_{K_\e (\Pi_p)} | g_{p}( \vec c_\mu, \Theta ) + d_\mu | \right) \label{ineg1}
\end{multline*}

In Euclidean space there is an equality
$$ 
\mathrm {Vol}_{k}(S^k)\| v \|^2 = (k+1) \int_{S^k} \langle v, \Theta \rangle^2,  \quad \mbox{for all}\ \ v \in \mathbb R^k.
$$
By the expansion of the induced metric, we obtain for $\e$ small enough
$$ 
\frac{1}{2} \ \mathrm {Vol}_{k}(S^k) \, \e^k \, \|  v \|^2 \leq (k+1) \int_{K_\varepsilon (\Pi_p)} | g_p( v, \Theta) |^2. 
$$
Also, because $ \mathrm {Vol}_{k}(K_\varepsilon (\Pi_p)) = \mathcal O(\varepsilon^k) $, we deduce
\begin{equation} \label{en1}
 \| \vec a \| \leq c \, \varepsilon^2 \Big( \| \vec a \| +  \sum_{\mu=k+1}^{m+1} \left( \| \vec c_\mu \| + | d_\mu | \right) \Big).
\end{equation}

\medskip
 
Now move $p$ in the direction of a vector $ \xi \in \Pi_p^\bot $ to get
$$ 
\left| g(Z_{\e,\Pi_p,\xi},N_\mu) - g_p(\xi, E_\mu) \right| \leq c \, \e^2 \| \xi \|, \quad \mbox{and} \quad | g(n,Z_{\e,\Pi_p,\xi}) | 
\leq c \, \e^2 \| \xi \|.
$$

\medskip
 
Thus we can write
\[
\begin{array}{ll} \sum \limits_{\mu=1}^{m-k} \displaystyle \int_{ K_\e (\Pi_p) } & \left( g_p ( \vec c_\mu,\Theta) + d_\mu \right) \, g_p( \xi, E_{\mu} ) \\[5mm] 
& \leq \Big| \sum \limits_{\mu=k+1}^{m+1} \displaystyle \int_{K_\e (\Pi_p) } \left( g_p ( \vec c_\mu, \Theta ) + d_\mu \right) g(Z_{\e, \Pi_p, \xi}, N_\mu ) \\[5mm] 
& - \sum \limits_{\mu=k+1}^{m+1} \displaystyle \int_{K_\e (\Pi_p)} \left( g_p ( \vec c_\mu, \Theta ) + d_\mu \right) g_p( \xi, E_{\mu}) \\[5mm]
& + \displaystyle \int_{ K_\e (\Pi_p) } g_p( \vec a, \Theta ) g(Z_{\e, \Pi_p, \xi}, n ) \Big| \\[5mm]
& \leq  c \, \e^2 \| \xi \| \displaystyle \int_{K_\e (\Pi_p)} 
\Big(|g_p( \vec a, \Theta )| + \sum \limits_{\mu=k+1}^{m+1} | g_p ( \vec c_\mu, \Theta)  + d_\mu | \Big) \end{array}. 
\]
Taking $ \xi = d_\nu \, E_\nu $ gives
\begin{equation} \label{en2} \begin{array}{ll}
 \ \displaystyle \int_{K_\e (\Pi_p)} d_\nu \ g_p (\vec c_\nu, \Theta) + {d_\nu}^2 & \leq c \, \e^2 |d_\nu| \Big( \int_{K_\e (\Pi_p)}| g_p( \vec a, 
\Theta) | \\[3mm] 
 & + \sum \limits_{\mu=k+1}^{m+1} \displaystyle \int_{K_\e (\Pi_p)} | g_p( \vec c_\mu, \Theta ) + d_\mu | \Big) \end{array}
\end{equation}

\medskip

Next consider a perturbation of $\Pi_p$ by a one-parameter family of rotations of $\Pi_p$ in $T_p M$ generated 
by an $(m+1) \times (m+1)$ skew matrix $A$. Then 
$$ 
\left. D \mathcal E_\varepsilon \right|_{\Pi_p} (A) = \left.\frac{d}{dt} \right|_{t=0} \mathcal E_\varepsilon( (I + tA + O(t^2))\Pi_p ) = \left.\frac{d}{dt} \right|_{t=0} \mathcal E( A_t(K_\varepsilon (\Pi_p)) ), 
$$
where, in geodesic normal coordinates 
\[
A_t(x) = x + tA x + \mathcal O(t^2).
\]
The coordinates of the vector field associated to this flow are
$$ 
Z_{\e, \Pi_p, \xi }(x) = \left. \frac{d}{dt} \right|_{t=0} A_t (x) = Ax. 
$$
Considering only matrixes $ A \in \mathfrak o(m)$ such that $A: \Pi_p \to \Pi_p^\bot $, we obtain
$$ 
\left| g(Z_{\e,\Pi_p,\xi},n) \right| \leq c \, \e^2 \| A \Theta \|, \quad \mbox{and} \quad \left| g(Z_{\e, \Pi_p, \xi}, N_\mu) - g_p(A \Theta, E_\mu) \right| \leq c \, \e^2 \| A \Theta \|. 
$$
This gives, then, 
\[
\begin{array}{ll} \sum \limits_{\mu=k+1}^{m+1} & \displaystyle \int_{K_\e (\Pi_p)} \left( g_p(\vec c_\mu, \Theta) + d_\mu \right) g_p (A \Theta, E_\mu) \\[5mm]
& \leq  \Big| \sum \limits_{\mu=k+1}^{m+1} \displaystyle \int_{K_\e (\Pi_p)} \left( g_p( \vec c_\mu, \Theta) + d_\mu \right) g(Z_{\e, \Pi_p, \xi}, N_\mu) \\[5mm]
& - \sum \limits_{\mu=k+1}^{m+1} \displaystyle \int_{K_\e (\Pi_p)} \left( g_p(\vec c_\mu, \Theta) + d_\mu \right) g_p(A \Theta, E_\mu) \\[5mm]
& + \displaystyle \int_{K_\e (\Pi_p)} g_p( \vec a, \Theta) g(Z_{\e, \Pi_p, \xi}, n) \Big| \\[5mm]
& \leq c \, \e^2 \displaystyle \int_{K_\e (\Pi_p)} \left(\| A \Theta \| \ | g_p(\vec a,\Theta) | + \sum_{\mu=k+1}^{m+1} \| A \Theta \| \ | g_p ( \vec c_\mu, \Theta) + d_\mu |\right). 
\end{array}
\]

Let $C_\nu$ be the $(m-k)\times(k+1)$ matrix with column $\nu$ equal to the vector $ \vec c_\nu \in \mathbb R^{k+1}$,
and all other columns equal to $0$. Then if
$$ 
A = \left( \begin{array}{cc} 0 & -C_\nu^T \\ C_\nu & 0 \end{array} \right), 
$$
we get
\begin{equation} \label{en3} 
\begin{array}{rl} 
\displaystyle \int_{K_\e (\Pi_p)} g_p(\vec c_\nu, \Theta)^2 & + \ g_p(\vec c_\nu,\Theta) d_\nu \leq C \e^2 
\Big( \int_{K_\varepsilon (\Pi_p)} | g_p(\vec c_\nu,\Theta) | \ | g_p(\vec a,\Theta) | \\[5mm]
& + \sum \limits_{\mu=k+1}^{m+1} \displaystyle \int_{K_\e (\Pi_p)} | g_p(\vec c_\nu,\Theta) |\left| g_p(\vec c_\mu,\Theta) + d_\mu \right| \Big)
\end{array}
\end{equation}

Adding \eqref{en2} and \eqref{en3} now gives
\begin{multline*}
 \int_{K_\e (\Pi_p)}| d_\nu + g_p(\vec c_\nu,\Theta) |^2 \leq c \, \e^2 \Big( \int_{K_\e (\Pi_p)} \left( |d_\nu| + | g_p(
 \vec c_\nu,\Theta) | \right)| g_p( \vec a, \Theta) | \\[3mm]
 + \sum_{\mu=k+1}^{m+1} \left( |d_\nu| + | g_p(\vec c_\nu,\Theta) \right| ) \left| g_p(\vec c_\mu,\Theta) + d_\mu \right| \Big)
\end{multline*}

In Euclidean space, if $v \in \RR^{k+1}$ and $\alpha \in \RR$ are arbitrary, then 
$$ 
\int_{S^k} \left| \alpha + \langle v, \Theta \rangle \right|^2  = \left( \alpha^2 + \frac{1}{k+1}\| v \|^2 \right)\mathrm {Vol}_{k}(S^k).
 $$
Using, once again, the decomposition of the induced metric on $K_\varepsilon (\Pi_p)$, we see that when $\e$ is small enough,
\begin{equation} 
\frac{1}{2(k+1)} \varepsilon^{k} \, \mathrm {Vol}_{k} (S^k) \left( \alpha^2 + \| v \|^2 \right) \leq \int_{K_\varepsilon (\Pi_p)} \left| \alpha + g_p( v, \Theta) \right|^2. 
\end{equation}
which give
\begin{multline*}
\| \vec c_\nu \|^2 + |d_\nu|^2  \\ 
\leq c \, \dfrac{1}{\e^{k-2}} \left( \| \vec c_\nu \| + | d_\nu | \right) \Big( \displaystyle \int_{K_\e (\Pi_p)}| g_p( \vec a, \Theta) | + \sum \limits_{\mu=1}^{m-k} \int_{K_\e (\Pi_p)} | g_p( \vec c_\mu, \Theta) + d_\mu |  \Big)
\end{multline*}
Since $ \mathrm{Vol}_{k} (K_\varepsilon (\Pi_p)) = \mathcal O(\varepsilon^k) $, we get
\begin{equation}
 \ \| \vec c_\nu \| + |d_\nu| \leq c \, \e^2 \, ( \| \vec a \| + \sum_{\mu=1}^{m-k} \left( \| \vec c_\mu \| + | d_\mu | \right) ) \label{en4}
 \end{equation}
Adding \eqref{en1} and \eqref{en4} gives 
$$ 
\left( \| \vec a \| + \sum_{\mu=1}^{m-k} \left( \| \vec c_\mu \| + | d_\mu | \right) \right) \leq c \, \e^2 \left( \| \vec a \| + \sum_{\mu=1}^{m-k} ( \| \vec c_\mu \| + | d_\mu | ) \right), 
$$
which implies finally that $ \|  \vec a \| = 0 $, $ \| \vec c_\mu \| = 0 $ and $ |d_\mu| = 0$, $k+1 \leq \mu$. 

\medskip

We conclude that if $\Pi_p$ is a critical point of the functional $\mathcal E_\varepsilon$, then the manifold 
$ K_\varepsilon (\Pi_p)  $ is a constant mean curvature submanifold of $M$.
\end{proof}

\section{Appendix 1}

\noindent {\bf Mean curvature of submanifolds:}  
Let $ \Sigma^k \subset M^{m+1}$ be an embedded submanifold. Let $x^1, \ldots, x^k$ be local coordinates on $\Sigma$ and 
\[
E_\alpha  = \partial_{x_\alpha}
\]
the corresponding coordinate vector fields.  Suppose that $E_{k+1}, \ldots, E_{m+1}$ is a local frame for
$N\Sigma$. This gives local coordinates transverse to $\Sigma$ by 
\[
p \in \Sigma \longmapsto {\exp}_p (\sum_{j=k+1}^{m+1} x^j \, E_j)
\]
We make the convention that Greek indices run from $1$ to$ k$, while Latin indices run from $k+1$ to $m+1$. 
The induced metric on $\Sigma$ has coefficients $\bar g_{\alpha \beta}$, while 
\[
\bar h^i_{\alpha \beta}  : = \Gamma^i_{\alpha \beta}  = g ( \nabla_{E_\alpha} E_\beta , E_i) 
\]
are the coefficients of the shape operator. We also record the Christoffel symbols 
\[
\Gamma^j_{\alpha i} = g ( \nabla_{E_\alpha} E_i , E_j)
\]

The following result is standard, cf.\ \cite{Mah-Maz-Pac} for a proof.
\begin{lemma}
If $ X = \sum \limits_{j=k+1}^{m+1} x^j \, E_j $, then 
\begin{center}
\scalebox{0.9}{$
\begin{array}{rl}
g_{\alpha \beta} & = \bar g_{\alpha \beta} - 2 \, \bar g ( \bar h_{\alpha \beta}, X ) + g( R(E_\alpha,X)E_\beta,X ) + g( \nabla_{E_\alpha} X, \nabla_{E_\beta} X ) + \mathcal O(|x|^3) \\[3mm]
& = \bar g_{\alpha \beta}  -  2 \, \bar h_{\alpha \beta}^i \, x^i + \left(  g( R (E_\alpha, E_i) E_\beta , E_j ) + g^{\gamma \gamma'}\bar h^i_{\alpha \gamma}Ê\, \bar h^j_{\gamma' \beta} + \Gamma^i_{\alpha \ell} \, \Gamma^j_{\ell  \beta}  \right) \, x^i \, x^j + \mathcal O (|x|^3) \\[3mm]
g_{\alpha j} & = - \Gamma_{\alpha j}^i \, x^i + \mathcal O (|x|^2) \\[3mm]
g_{ij} & =  \delta_{ij}Ê+ \frac{1}{3}Ê\, g(R(E_i, E_\ell) E_j, E_{\ell'}) \, x^\ell \, x^{\ell'} + \mathcal O (|x|^3)
 \end{array}$}
\end{center}
\end{lemma}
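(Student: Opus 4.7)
The proof is a direct Taylor expansion of the metric in the Fermi chart already set up above: parametrize $\Sigma$ locally by $(x^1,\dots,x^k)$ with $\partial_{x^\alpha}=E_\alpha$ along $\Sigma$, and extend to a chart on $M$ via $\Phi(x^1,\dots,x^{m+1})=\exp_{\bar p(x^\alpha)}(\sum_{j>k} x^j E_j)$, so that at points of $\Sigma$ (i.e., $x^j=0$ for $j>k$) we have $\partial_{x^j}=E_j$. The three claimed expansions will then come from computing the zero-, first-, and second-order normal derivatives of $g_{\alpha\beta}$, $g_{\alpha j}$, $g_{ij}$ at $\Sigma$ and assembling their Taylor polynomials. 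The zero-order values are immediate: $g_{\alpha\beta}|_\Sigma=\bar g_{\alpha\beta}$, $g_{\alpha j}|_\Sigma=0$, and $g_{ij}|_\Sigma=\delta_{ij}$.

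For the first-order terms in the normal variables, I would use the commutation $[\partial_{x^i},\partial_{x^\alpha}]=0$ for coordinate vector fields, together with the Gauss-lemma identity $\nabla_{\partial_{x^i}}\partial_{x^i}=0$ along each normal geodesic. A direct computation gives $\partial_{x^i}g_{\alpha\beta}|_\Sigma=g(\nabla_{E_\alpha}E_i,E_\beta)+g(E_\alpha,\nabla_{E_\beta}E_i)=-2\bar h^i_{\alpha\beta}$, after moving the covariant derivatives and using the symmetry of the second fundamental form. Similarly, $\partial_{x^i}g_{\alpha j}|_\Sigma=\Gamma^j_{\alpha i}$, which equals $-\Gamma^i_{\alpha j}$ by the skew-symmetry of the connection matrix in the orthonormal normal frame, giving the stated expansion of $g_{\alpha j}$. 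Radial differentiation of $g_{ij}$ yields no first-order term, again by Gauss.

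For the second-order terms I would invoke the standard Jacobi field identity: the second variation of the exponential map with respect to its second argument produces the ambient curvature tensor. Applied to the normal-normal block, this gives the classical expansion $g_{ij}=\delta_{ij}+\tfrac13 g(R(E_i,E_\ell)E_j,E_{\ell'})x^\ell x^{\ell'}+\mathcal O(|x|^3)$. For $g_{\alpha\beta}$ the second $x^ix^j$-coefficient splits into three contributions: a curvature term $g(R(E_\alpha,E_i)E_\beta,E_j)$ from the Jacobi variation of $\partial_{x^\alpha}$ along the normal geodesic; a shape-operator-squared term $g^{\gamma\gamma'}\bar h^i_{\alpha\gamma}\bar h^j_{\gamma'\beta}$ arising from the product of two first-order $-2\bar h\cdot x$ corrections pushed through $\bar g^{\gamma\gamma'}$ in the tangential metric; and a connection term $\Gamma^i_{\alpha\ell}\Gamma^j_{\ell\beta}$ coming from the non-parallel transport of the normal frame as the base point moves along $\Sigma$. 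The first, coordinate-invariant form stated in the lemma is just the intrinsic repackaging, with $X=\sum_j x^j E_j$, of these three contributions as $-2g(\bar h_{\alpha\beta},X)+g(R(E_\alpha,X)E_\beta,X)+g(\nabla_{E_\alpha}X,\nabla_{E_\beta}X)$.

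The main obstacle is bookkeeping: three distinct mechanisms contribute quadratic-in-$x$ terms to $g_{\alpha\beta}$, and care is required to avoid double-counting between the Jacobi-variation piece and the connection-squared piece, since both reflect the non-triviality of the normal frame along $\Sigma$. Once the conventions are fixed (orthonormal $E_i$ in the normal bundle, $\bar h^i_{\alpha\beta}=g(\nabla_{E_\alpha}E_\beta,E_i)$ for the shape operator, and $\Gamma^j_{\alpha i}=g(\nabla_{E_\alpha}E_i,E_j)$ for the induced connection), each term appears exactly once through the mechanism above, and the mixed block $g_{\alpha j}$ drops out cleanly from the first-order expansion alone.
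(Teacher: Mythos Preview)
The paper does not actually prove this lemma; it states that the result is standard and refers to \cite{Mah-Maz-Pac} for a proof. Your outline---Taylor-expand the metric in the Fermi chart by computing successive normal derivatives at $\Sigma$, using $[\partial_i,\partial_\alpha]=0$, the Gauss lemma (polarized to give $\nabla_{E_i}E_j=0$ at $\Sigma$), and the Jacobi equation for the second-order terms---is exactly the standard argument and is correct in substance.

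One point of phrasing is misleading and worth correcting before you write it up carefully. You describe the $g^{\gamma\gamma'}\bar h^i_{\alpha\gamma}\bar h^j_{\gamma'\beta}$ contribution as ``arising from the product of two first-order $-2\bar h\cdot x$ corrections pushed through $\bar g^{\gamma\gamma'}$''. That is not how a Taylor expansion works: the second-order coefficient of $g_{\alpha\beta}$ is $\tfrac12\partial_i\partial_j g_{\alpha\beta}|_\Sigma$, not a product of first-order coefficients of $g_{\alpha\beta}$. The correct mechanism is the one encoded in the invariant form of the lemma: the coordinate field $\partial_\alpha$ along the normal geodesic is a Jacobi field with initial data $J_\alpha(0)=E_\alpha$ and $\dot J_\alpha(0)=\nabla_{E_\alpha}X$, and the quadratic term $g(\nabla_{E_\alpha}X,\nabla_{E_\beta}X)$ in $g(J_\alpha,J_\beta)$ splits, upon decomposing $\nabla_{E_\alpha}E_i$ into its tangential part $-\bar h^i_{\alpha\gamma}\bar g^{\gamma\gamma'}E_{\gamma'}$ and normal part $\Gamma^\ell_{\alpha i}E_\ell$, into precisely the shape-operator-squared and connection-squared pieces. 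With that clarification, your three ``mechanisms'' collapse to two (Jacobi/curvature and initial-velocity-squared), and there is no risk of double counting.
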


Let $\Phi$ be a smooth section of $N\Sigma$ and consider the normal graph $\Sigma_\Phi = \{{\exp}_p ( \Phi(p)): p \in \Sigma \}$.
Now let us use the previous lemma to expand the metric and volume form on $\Sigma_\Phi$.  To state this result properly, 
introduce $\nabla^N$, the induced connection on $N\Sigma$,
\[
\nabla^N \Phi = \pi_{N\Sigma} \circ \nabla \Phi
\]
Using the definitions of \S 2, we find that
\begin{lemma}
\[
\begin{array}{rllll}
{\rm Vol_k} (\Sigma_\Phi) & = & \displaystyle {\rm Vol}_k (\Sigma) - \int_\Sigma g(H (\Sigma), \Phi) \, {\rm dvol}_\Sigma \\[3mm]
& + &  \displaystyle \frac{1}{2}Ê\int_\Sigma \left( |\nabla^N \Phi|_g^2 -  g( (\Ric_\Sigma + \mathfrak H_\Sigma^2 ) \, \Phi , \Phi)  \right)\, {\rm dvol}_\Sigma \\[3mm]
& + &  \displaystyle \frac{1}{2}Ê\int_\Sigma ( g(H(\Sigma), \Phi))^2 \, {\rm dvol}_\Sigma + ...
\end{array}
\]
\end{lemma}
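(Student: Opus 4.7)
The plan is to work in the Fermi coordinates transverse to $\Sigma$ furnished by the preceding lemma, parametrize $\Sigma_\Phi$ as the graph $F: (x^1,\ldots,x^k)\mapsto(x^1,\ldots,x^k,\Phi^{k+1}(x),\ldots,\Phi^{m+1}(x))$, and expand the induced volume form to second order in $\Phi$. Since $F_*E_\alpha = E_\alpha + \partial_\alpha\Phi^i\, E_i$, the pullback metric reads
\[
\tilde g_{\alpha\beta}=g_{\alpha\beta}(\Phi)+\partial_\alpha\Phi^i\,g_{\beta i}(\Phi)+\partial_\beta\Phi^j\,g_{\alpha j}(\Phi)+\partial_\alpha\Phi^i\,\partial_\beta\Phi^j\,g_{ij}(\Phi).
\]
Substituting the three expansions from the previous lemma and grouping by order in $\Phi$ produces
$
\tilde g_{\alpha\beta}=\bar g_{\alpha\beta}+B^{(1)}_{\alpha\beta}+B^{(2)}_{\alpha\beta}+\mathcal O(|\Phi|^3),
$
with $B^{(1)}_{\alpha\beta}=-2\,\bar h^i_{\alpha\beta}\Phi^i$ and $B^{(2)}$ collecting all quadratic contributions.

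I would then apply the standard expansion
\[
\sqrt{\det\tilde g}\,=\sqrt{\det\bar g}\,\bigl(1+\tfrac12\mathrm{tr}_{\bar g}B-\tfrac14\mathrm{tr}_{\bar g}(B^2)+\tfrac18(\mathrm{tr}_{\bar g}B)^2+\cdots\bigr),
\]
with $B=\bar g^{-1}(\tilde g-\bar g)$. The linear part gives $\tfrac12\mathrm{tr}_{\bar g}B^{(1)}=-\bar g^{\alpha\beta}\bar h^i_{\alpha\beta}\Phi^i=-g(H(\Sigma),\Phi)$, producing the first-order term of the lemma. The quadratic part splits into three contributions: $-\tfrac14\mathrm{tr}_{\bar g}((B^{(1)})^2)=-\sum_{\alpha,\beta}\langle h(E_\alpha,E_\beta),\Phi\rangle^2=-\langle\frakH^{(2)}_\Sigma\Phi,\Phi\rangle$; the term $\tfrac18(\mathrm{tr}_{\bar g}B^{(1)})^2=\tfrac12(g(H(\Sigma),\Phi))^2$, which is precisely the last term displayed in the lemma; and $\tfrac12\mathrm{tr}_{\bar g}B^{(2)}$, into which I would substitute the explicit $\Phi^i\Phi^j$ coefficients from the expansion of $g_{\alpha\beta}$, the cross terms $\partial_\alpha\Phi^i\,g_{\beta i}(\Phi)=-\Gamma^j_{\beta i}\partial_\alpha\Phi^i\,\Phi^j+\cdots$, and $\partial_\alpha\Phi^i\,\partial_\beta\Phi^j\,\delta_{ij}$.

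The two curvature identities $\bar g^{\alpha\beta}\,g(R(E_\alpha,E_i)E_\beta,E_j)=-\langle\Ric_\Sigma E_i,E_j\rangle$ and $\bar g^{\alpha\beta}\bar g^{\gamma\gamma'}\bar h^i_{\alpha\gamma}\bar h^j_{\gamma'\beta}=\langle\frakH^{(2)}_\Sigma E_i,E_j\rangle$ convert the first two $\Phi^i\Phi^j$ coefficients of $g_{\alpha\beta}(\Phi)$ into $-\langle\Ric_\Sigma\Phi,\Phi\rangle+\langle\frakH^{(2)}_\Sigma\Phi,\Phi\rangle$. Combining with the $-\langle\frakH^{(2)}_\Sigma\Phi,\Phi\rangle$ produced earlier cancels the $\frakH^{(2)}$ piece correctly (leaving the stated $-\tfrac12\langle(\Ric_\Sigma+\frakH^{(2)}_\Sigma)\Phi,\Phi\rangle$), and the remaining Christoffel-type contributions need to reassemble into $\tfrac12|\nabla^N\Phi|^2$.

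The main obstacle is precisely this last reassembly. Using that $\nabla^N_{E_\alpha}E_i=\Gamma^j_{\alpha i}E_j$ for the normal frame, one has $\nabla^N_{E_\alpha}\Phi=(\partial_\alpha\Phi^j+\Gamma^j_{\alpha i}\Phi^i)E_j$, so
\[
|\nabla^N\Phi|^2=\sum_{\alpha,j}\bigl(\partial_\alpha\Phi^j\bigr)^2+2\sum_{\alpha,i,j}\Gamma^j_{\alpha i}\Phi^i\,\partial_\alpha\Phi^j+\sum_{\alpha,i,\ell,j}\Gamma^j_{\alpha i}\Gamma^j_{\alpha\ell}\Phi^i\Phi^\ell.
\]
The first summand matches the trace of $\partial_\alpha\Phi^i\partial_\beta\Phi^j\delta_{ij}$; the second matches (after antisymmetrization of Christoffel symbols on normal indices in an orthonormal frame) the contribution of $2\,\partial_{(\alpha}\Phi^i\,g_{\beta)i}(\Phi)$; the third matches the $\Gamma^i_{\alpha\ell}\Gamma^j_{\ell\beta}\Phi^i\Phi^j$ piece in the expansion of $g_{\alpha\beta}$. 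Summing produces exactly $\tfrac12|\nabla^N\Phi|^2$, and integrating $\sqrt{\det\tilde g}$ over $\Sigma$ yields the claimed expansion.
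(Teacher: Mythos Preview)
Your proof is correct and follows essentially the same route as the paper: expand the pullback metric on $\Sigma_\Phi$ in Fermi coordinates using the preceding lemma, then expand $\sqrt{\det}$ to second order. The only organizational difference is that the paper packages all derivative contributions as the single term $g(\nabla_{E_\alpha}\Phi,\nabla_{E_\beta}\Phi)$ (with $\Phi$ now $x$-dependent) and splits $\nabla\Phi$ into its tangential (Weingarten) and normal parts, obtaining $\bar g^{\gamma\gamma'}\langle\bar h_{\alpha\gamma},\Phi\rangle\langle\bar h_{\gamma'\beta},\Phi\rangle + g(\nabla^N_{E_\alpha}\Phi,\nabla^N_{E_\beta}\Phi)$ in one stroke rather than through the explicit Christoffel-symbol reassembly you carry out.
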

\begin{proof}
First of all we expand the induced metric on $\Sigma_\Phi$. Using the result of the previous Lemma, we find
\begin{align*}
 (\bar g_\Phi)_{\alpha \beta} & = \bar g_{\alpha \beta} - 2  \, g (\bar h_{\alpha \beta} , \Phi) + g( R (E_\alpha  , \Phi) \,E_\beta  , \Phi) + g( 
 \nabla_{E_\alpha} \Phi, \nabla_{E_\beta} \Phi ) + \ldots \\[3mm]
 & = \bar g_{\alpha \beta} - 2  \, g (\bar h_{\alpha \beta} , \Phi) + g( R (E_\alpha  , \Phi) \,E_\beta  , \Phi) \\[3mm] 
 & + \bar g^{\gamma \gamma'}  g(\bar h_{\alpha \gamma} , \Phi) \, g( \bar h_{\gamma \beta} , \Phi) + g( \nabla^N_{E_\alpha} \Phi, \nabla_{E_\beta}^N \Phi) + \ldots
\end{align*}
Now use the well known expansions
\[
{\rm det} (I+ A) = 1 + {\rm Tr} \, A + \frac{1}{2} \, \left( ({\rm Tr} A)^2- {\rm Tr} (A^2) \right) + ...
\]
together with $\sqrt{1+x} = 1 + \frac{1}{2} \, x - \frac{1}{8} \, x^2 + ...$ to conclude that
\[
\begin{array}{rlllll}
\sqrt{{\rm det} \, \bar g_\Phi}= \left(  1 -  g(H (\Sigma), \Phi) +  \frac{1}{2}Ê \left( |\nabla^N \Phi|_g^2  \right. \right.  -  g( (\Ric_\Sigma + (\mathfrak H)_\Sigma^2 ) \, \Phi , \Phi) \\[3mm]
  + \displaystyle \left. \left. ( g(H(\Sigma), \Phi))^2 \right) + ...  \right) \, \sqrt{{\rm det} \, \bar g}
\end{array}
\]
This completes the proof.
\end{proof}

From this we obtain the first and second variations of the volume functional,
\begin{equation}
D_\Phi {\rm Vol_k} (\Sigma_\Phi)|_{\Phi} \Psi  =  - \int_\Sigma g(H (\Sigma_\Phi), \Psi) \, {\rm dvol}_{\Sigma_\Phi},
\label{eq:fd}
\end{equation}
and 
\[
\begin{array}{rllll}
D^2_\Phi {\rm Vol_k} (\Sigma_\Phi)|_{\Phi =0} (\Psi, \Psi) &  = & \displaystyle  \int_\Sigma \left( |\nabla^N \Psi|^2 -  g((\Ric_\Sigma  + \mathfrak H_\Sigma^2 ) \, \Psi , \Psi)  \right)\, {\rm dvol}_\Sigma  \\[3mm]
& + &   \displaystyle \int_\Sigma ( g(H(\Sigma), \Psi))^2 \, {\rm dvol}_\Sigma.
\end{array}
\]

On the other hand, differentiating (\ref{eq:fd}) once more gives
\[
\begin{array}{rllll}
D^2_\Phi {\rm Vol_k} (\Sigma_\Phi)|_{\Phi =0} (\Psi , \Psi)  & = &  - \displaystyle \int_\Sigma g(D_\Phi H (\Sigma_\Phi)|_{\Phi =0} \Psi, \Psi) \, {\rm dvol}_\Sigma \\[3mm]
& + & \displaystyle  \int_\Sigma (g(H (\Sigma), \Psi) )^2 \, {\rm dvol}_K.
\end{array}
\]
Comparing the two formul\ae\ implies that the orthogonal projection of the Jacobi operator to $ N \Sigma$ equals
\[
J_\Sigma^N := D_\Phi H (\Sigma_\Phi)|_{\Phi =0}  =  \Delta^N_g +  \Ric^N_\Sigma + \mathfrak H_\Sigma^2 \, ,
\]

\section{Appendix 2}
Let $ K_\e(\Pi_p) $ be the constant mean curvature submanifold constructed in Proposition \eqref{expVol} and denote by
$F : T_pM \longrightarrow M$ the exponential mapping. Recall that
$$ K_\e(\Pi_p) = F(S^k_{\e,\Phi}), $$
where $ S^k_{\e,\Phi} $ is a submanifold of $ T_p M $ parametrized by 
$ \left\{ \e \, (1-\phi) \, \Theta + \e \, \Phi^\bot, \ \Theta \in S^k \right\} $. It follows from the proof of that 
proposition that 
\begin{align*} 
 & \phi(\Theta) = \dfrac{\e^2}{3} \left( \dfrac{2}{k(k+2)} \mathcal R_{k+1}(\Pi_p) - \dfrac{1}{k+2} \mathcal Ric(\Pi_p)(\Theta,\Theta) \right) + \mathcal O(\e^3), \\[3mm] 
 & \Phi^\bot = \mathcal O(\e^3). 
\end{align*}

\medskip

There is also the minimal submanifold $ Q_\e(\Pi_p) = F( B^{k+1}_{\e,\Phi} ) $,
\medskip
\newline where $  B^{k+1}_{\e, \Phi} = \left\{  \e \, y + \e \, U_\Phi(y), \ y \in B^{k+1} \right\} $ and
\begin{multline*} U_\Phi(y) = \phi \left( y / \| y \| \right) + W(y) + \mathcal O(p)(\e^3), \\[3mm] 
W(y) = \dfrac{1}{(k+3)} \, \sum_{i=1}^{k+1} \mathcal Ric^\perp(\Pi_p)_{i \mu} (|y|^2 - 1) \, y_i \, E_\mu. \end{multline*}

We shall calculate the volume forms of $ S^k_{\e,\Phi} $ and $B^{k+1}_{\e,\Phi} $ with respect to $ F^*g $. To prepare for this,
recall that near $x=0$
\begin{align*} \label{metric}
 (F^*g)_{ij} & = \delta_{ij} + \dfrac{1}{3} \, g_p(R_p(x,E_i)x,E_j) + \dfrac{1}{6} \, g_p(\nabla_x R_p(x,E_i)x,E_j) \\[3mm]
 & + \dfrac{1}{20} \, g_p(\nabla_x \nabla_x R_p(x,E_i)x,E_j) \\[3mm]
 & + \sum \limits_{\ell = 1}^{m+1} \dfrac{2}{45} \, g_p(R_p(x,E_i)x,E_\ell) \, g_p(R_p(x,E_j)x,E_\ell) + \mathcal O_p(|x|^5)
\end{align*}
where $ R_p $ is the curvature tensor of $ M $ at the point $p$, cf.\ \cite{Sch-Yau}.

\subsection{Volume of the CMC sphere} We first calculate the metric on $ S^k_{\e,\Phi} $. In terms of the coordinate
vector fields $ \Theta_\alpha, \alpha = 1,\ldots, k $ which are tangent to $S^k$, we can write the tangent vector fields to 
$S^k_{\e,\Phi}$ as
$$ 
\tau_\alpha = \e \, (1 - \phi(\Theta)) \, \Theta_\alpha - \e \, \partial_\alpha \, \phi \, \Theta + \sum_{\mu = k+1}^{m+1} \e \, \partial_\alpha \Phi^\mu \, E_\mu, \quad \alpha = 1, \ldots, k.
$$
The metric coefficients then equal 
\begin{center}
 \scalebox{0.9} {$ \begin{array}{ll} g^K_{\alpha,\beta} & = \e^2 (1 - \phi)^2 \, g^S_{\alpha, \beta} +  \e^2 \, \partial_\alpha \phi \, \partial_\beta \phi + \dfrac{\e^4}{3} \, (1 - \phi)^4 \, 
 g_p(R_p(\Theta,\Theta_\alpha)\Theta,\Theta_\beta) \\[3mm]
 & + \dfrac{\e^5}{6} \, g_p( \nabla_\Theta R_p(\Theta,\Theta_\alpha)\Theta, \Theta_\beta) + \dfrac{\e^6}{20} g_p( \nabla_\Theta \nabla_\Theta R_p(\Theta, \Theta_\alpha)\Theta, \Theta_\beta ) \\[5mm]
 & + \sum \limits_{l=1}^{k+1} \dfrac{2\e^6}{45} \, g_p( R_p(\Theta, \Theta_\alpha)\Theta, E_l) \, g_p( R_p(\Theta, \Theta_\beta)\Theta, E_l) \\[5mm]
 & + \sum \limits_{\mu = k+1}^{m+1} \dfrac{2\e^6}{45} \, g_p( R_p(\Theta, \Theta_\alpha)\Theta, E_\mu) \, g_p( R_p(\Theta, \Theta_\beta)\Theta, E_\mu) + \mathcal O(\e^7)     \end{array} $}
\end{center}

Using 
$$ \sqrt{\det(I+A)} = 1 + \dfrac{1}{2} \mathrm{tr}A + \dfrac{1}{8}(\mathrm{tr}A)^2 - \dfrac{1}{4} \mathrm{tr}(A^2) + \mathcal O(|A|^3), $$ 
we get
\begin{center}
 \scalebox{0.9}{ $ \begin{array}{ll}
 \e^{-k} \, \dfrac{\sqrt{\det g^K}}{\sqrt{ \det g^S }} & = 1 - k \phi + \dfrac{k(k-1)}{2} \phi^2 + \dfrac{1}{2} \, | \nabla_{S^k} \phi |^2 \\[5mm]
 & - \dfrac{\e^2}{6} (1-(k+2)\phi) \, \mathcal Ric_{k+1}(\Pi_p)(\Theta,\Theta) - \dfrac{\e^3}{12} \, \nabla_\Theta \, \mathcal Ric_{k+1}(\Pi_p)(\Theta,\Theta) \\[5mm]
 & - \dfrac{\e^4}{40} \nabla_\Theta^2 \, \mathcal Ric_{k+1}(\Pi_p)(\Theta,\Theta) + \dfrac{\e^4}{72} \left( \mathcal Ric_{k+1}(\Pi_p)(\Theta,\Theta) \right)^2  \\[5mm]
 & - \dfrac{\e^4}{180} \, \sum \limits_{i,j = 1}^{k+1} g_p(R_p(\Theta,E_i)\Theta, 
 E_j) ^2 \\[5mm] 
 & + \dfrac{\e^4}{45} \sum \limits_{i = 1}^{k+1} \, \sum \limits_{\mu = k+1}^{m+1} g_p(R_p(\Theta,E_i)\Theta, E_\mu )^2 + \mathcal O_p(\e^5).
\end{array} $ }
\end{center}

\subsection{Volume of the minimal ball} The tangent vectors to $B^{k+1}_{\e,\Phi}$ are
$$ T_i(y) = \e \, ( 1 - u(y) ) \, E_i + \e \, \partial_{y_i} \, u(y) \, y + \e \sum_{\mu = k+1}^{m+1} \partial_{y_i} \, W^\mu(y) \, E_\mu + 
\mathcal O_p(\e^4), $$
where $ u(y) = \phi(y/|y|) $. The corresponding metric coefficients are
\begin{center}
 \scalebox{0.9}{ $ \begin{array}{ll}
 \e^{-2} \, g^Q_{ij} & = ( 1 - u )^2 \, \delta_{ij} + (1-u) \left( \partial_{y_i} u \, y_j + \partial_{y_j} u \, y_i \right) + |y|^2 \partial_{y_i} u \, \partial_{y_j} u + \sum \limits_{\mu = k+1}^{m+1} \partial_{y_i} W^\mu \, \partial_{y_j} W^\mu \\[5mm]
 & + \dfrac{\e^2}{3} \, (1 - u)^4 \, g_p(R_p(y, E_i) y, E_j) + \dfrac{\e^2}{3} \sum \limits_{\mu=k+1}^{m+1} \Big( W^\mu \,  g_p(R_p(E_\mu, E_i) y, E_j)  \\[5mm] 
 & + W^\mu \, g_p(R_p(y,E_i)E_\mu,E_j) + \partial_{y_i} W^\mu g_p(R_p(y,E_\mu)y,E_j) + \partial_{y_j} W^\mu \, g_p(R_p(y,E_i)y,E_\mu) \Big) \\[5mm]
 & + \dfrac{\e^3}{6} \, g_p( \nabla_y R_p(y,E_i) y, E_j) + \dfrac{\e^4}{20} \, g_p( \nabla_y \nabla_y R_p(y, E_i) y, E_j ) \\[5mm]
 & + \dfrac{ 2 \e^4}{45} \sum \limits_{l=1}^{k+1} g_p( R_p(y, E_i ) y, E_l) \, g_p( R_p(y, E_i) y, E_l) \\[5mm] 
 & + \dfrac{ 2 \e^4}{45} \sum \limits_{\mu=k+1}^{m+1} g_p( R_p(y, E_i ) y, E_\mu) \, g_p( R_p(y, E_i) y, E_\mu) + \mathcal O(\e^5) 
\end{array} $ }
\end{center}

\medskip

Putting $ y = r \Theta $, $ r \in (0,1) $ we calculate the volume element of $Q_\e(\Pi_p)$:
\begin{center}
 \scalebox{0.9}{ $ \begin{array}{ll}
  \e^{-(k+1)} \, \sqrt{\det g^Q} & = 1 - (k+1) \phi + \dfrac{k(k+1)}{2} \, \phi^2 + \sum \limits_{\mu=k+1}^{m+1} \dfrac{1}{2} | \nabla_{S^k} W^\mu |^2 \\[5mm] 
  & - \dfrac{\e^2}{6} \, r^2 \, \mathcal Ric_{k+1}(\Pi_p)(\Theta,\Theta) + \dfrac{\e^2}{6} \, r^2 \, (k+3) \phi \, \mathcal Ric_{k+1}(\Pi_p)(\Theta,\Theta)  
  \\[5mm] 
  & - \dfrac{\e^2}{3} \, r^2 \sum \limits_{i=1}^{k+1} \sum \limits_{\mu=k+1}^{m+1} \Big( W^\mu \, g_p( R_p(\Theta, E_i, E_\mu,E_i) + \partial_{y_i} W^\mu \, g_p(R_p(\Theta,E_i)\Theta, E_\mu) \Big) \\[5mm]
  & \dfrac{\e^3}{12} \, r^3 \, \nabla_\Theta \, \mathcal Ric_{k+1}(\Pi_p)(\Theta,\Theta)  - \dfrac{\e^4}{40} \, r^4 \, \nabla_\Theta^2 \, \mathcal Ric_{k+1}
  (\Pi_p)(\Theta,\Theta) \\[5mm] 
  & + \dfrac{\e^4}{72} \, r^4 \left( \mathcal Ric_{k+1}(\Pi_p)(\Theta,\Theta) \right)^2  - \dfrac{\e^4}{180} \, r^4 \, \sum \limits_{i,j = 1}^{k+1} g_p(R_p(\Theta,E_i)\Theta, E_j) ^2 \\[5mm] 
  & + \dfrac{\e^4}{45} \, r^4 \, \sum \limits_{i = 1}^{k+1} \, \sum \limits_{\mu = k+1}^{m+1} g_p(R_p(\Theta,E_i)\Theta, E_\mu )^2 + 
\mathcal O_p(\e^5). 
 \end{array} $ }
\end{center}

\medskip

\subsection{Expansion of the energy functional} Collecting the results above gives that 
\begin{center}
 \scalebox{0.9}{ $ \begin{array}{ll}
 \e^{-k} \Big( & \mathrm{Vol} (K_\e(\Pi_p)) - \dfrac{k}{\e} \, \mathrm{Vol}(Q_\e(\Pi_p)) \ \, \Big) \\[5mm] 
 & = \dfrac{1}{k+1} \, \mathrm{Vol}(S^k) - \dfrac{\e^2}{2} \dfrac{1}{k+3}  \displaystyle \int_{S^k} \, \mathcal Ric_{k+1}(\Pi_p)(\Theta, \Theta) \, d \sigma 
 + \displaystyle \int_{S^k} \dfrac{\e^2}{6} \, \mathcal Ric_{k+1} (\Pi_p) \, \phi \, d \sigma \\[5mm] 
 & + \e^4 \, \dfrac{5}{k+5} \displaystyle \int_{S^k} \Big[ - \dfrac{1}{40} \, \nabla_\Theta^2 \, \mathcal Ric_{k+1}(\Pi_p)(\Theta,\Theta) \\[5mm] 
 & + \dfrac{1}{72} \, \left( \mathcal Ric_{k+1}(\Pi_p)(\Theta,\Theta) \right)^2  - \dfrac{1}{180} \, \sum \limits_{i,j = 1}^{k+1} g_p(R_p(\Theta,E_i)\Theta, 
 E_j) ^2 \\[5mm] 
 & + \dfrac{1}{45} \, \sum \limits_{i = 1}^{k+1} \, \sum \limits_{\mu = k+1}^{m+1} g_p(R_p(\Theta,E_i)\Theta, E_\mu )^2 \Big] d \sigma \\[5mm]
 & + \sum \limits_{\mu=k+1}^{m+1} \dfrac{k}{2} \, \displaystyle \int_{B^{k+1}} W^\mu \, \Delta_{B^{k+1}} \, W^\mu \, dy \\[5mm] 
 & + \dfrac{\e^2}{3} \, k \, \sum \limits_{i=1}^{k+1} \displaystyle \int_{B^{k+1}} \Big( W^\mu \, g_p( R_p(\Theta, E_i, E_\mu, E_i) + \partial_{y_i} W^\mu \, R_p(\Theta,E_i,\Theta, E_\mu) \Big) + \mathcal O(\e^5)
 \end{array} $ }
\end{center}

\medskip

We now recall some identities. First, 
\begin{center} \scalebox{0.9}{ $ \displaystyle \int_{S^k}(\Theta^i)^2 \, d \sigma = \dfrac{1}{k+1} \, \mathrm{Vol}(S^k),  $ } \end{center}
\medskip
\begin{center} \scalebox{0.9}{ $ \int_{S^k} (\Theta^i)^4 \, d \sigma = 3 \displaystyle \int_{S^k}(\Theta^i \, \Theta^j)^2 \, d \sigma = \dfrac{3}{(k+1)(k+3)} \, \mathrm{Vol}(S^k) $ } \end{center}
\medskip
and second, if $ a_{ijpq} \in \RR$ $i,j,p,q = 1, \ldots, k+1$,  then
\medskip
\begin{center}
\scalebox{0.9}{ $ \begin{array}{ll} \sum \limits_{p,q,l,n=1}^{k+1} \, \displaystyle \int_{S^k} a_{pqln} \, \Theta^p \, \Theta^q \, \Theta^l \, \Theta^n \, d \sigma & = \frac{3}{(k+1)(k+3)} \, \mathrm{Vol}(S^k) \, \sum \limits_{i=1}^{k+1} \, a_{pppp} \\[3mm] 
 & + \frac{1}{(k+1)(k+3)} \, \mathrm{Vol}(S^k) \, \sum \limits_{q \neq p = 1}^{k+1} \left( a_{ppqq} + a_{pqpq} + a_{pqqp} \right) \\[3mm]\
 & = \frac{1}{(k+1)(k+3)} \, \mathrm{Vol}(S^k) \, \sum \limits_{p,q = 1}^{k+1} \left( a_{ppqq} + a_{pqpq} + a_{pqqp} \right)  \end{array}. $ }
\end{center}

\medskip

We now calculate each term: 

\medskip

\begin{center}
\scalebox{0.9}{ $ \begin{array}{ll}
 \displaystyle \int_{S^k} \mathcal Ric_{k+1}(\Pi_p)(\Theta,\Theta) \, d \sigma & = \sum \limits_{i,j = 1}^{k+1} \displaystyle \int_{S^k} \mathcal Ric_{k+1}(\Pi_p)(E_i,E_j) 
 \, \Theta^k \, \Theta^l \, d \sigma \\[5mm]
 & = \sum \limits_{i=1}^{k+1} \mathcal Ric_{k+1}(\Pi_p)(E_i,E_i) \, ( \Theta^i )^2 \, d \sigma \\[5mm]
 & = \dfrac{1}{k+1} \, \mathrm{Vol}(S^k) \, \mathcal R_{k+1}(\Pi_p) ;
\end{array} $ }
\end{center}

\medskip

\begin{center}
\scalebox{0.9}{ $ \begin{array}{ll}
 \displaystyle \int_{S^k} ( \mathcal Ric_{k+1}(\Pi_p)(\Theta,\Theta) )^2 d \sigma  
 & =  \dfrac{1}{(k+1)(k+3)} \, \mathrm{Vol}(S^k) \, \Big( 2 \sum \limits_{i,j=1}^{k+1} \, (\mathcal Ric_{k+1}(\Pi_p)(E_i,E_j))^2 \Big) \\[5mm] 
 & + \sum \limits_{i,j=1}^{k+1} \mathcal Ric_{k+1}(\Pi_p)(E_i,E_i) \, \mathcal Ric_{k+1}(\Pi_p)(E_j,E_j) \\[5mm]
 & =  \dfrac{1}{(k+1)(k+3)} \, \mathrm{Vol}(S^k) \left( 2 \, \left\| \mathcal Ric_{k+1}(\Pi_p) \right\|^2 + \mathcal R_{k+1}(\Pi_p)^2 \right) ; 
\end{array} $ }
\end{center}

\medskip

\begin{center}
\scalebox{0.9}{ $ \begin{array}{ll}
 \sum \limits_{\alpha,\beta = 1 }^{k} \displaystyle \int_{S^k} & g_p(R_p(\Theta,\Theta_\alpha)\Theta, \Theta_\beta) ^2 \, d\sigma \\[5mm] 
 & = \sum \limits_{i,j = 1}^{k+1} \displaystyle \int_{S^k} g_p(R_p(\Theta,E_i)\Theta,E_j)^2 \, d \sigma \\[5mm]
 & = \dfrac{1}{(k+1)(k+3)} \mathrm{Vol}(S^k) \sum \limits_{i,j,p,q = 1}^{k+1} \left( R_{ipjq}^2 + R_{ipjp} \, R_{iqjq} + R_{ipjq} \, R_{iqjp} \right) 
 \\[5mm]
 & = \dfrac{1}{(k+1)(k+3)} \, \mathrm{Vol}(S^k) \left( \left\| \mathcal Ric_{k+1}(\Pi_p)  \right\|^2 + \dfrac{3}{2} \left\| R_{k+1}(\Pi_p) \right\|^2 \right);
\end{array} $ }
\end{center}
 
\medskip
 
(we use here that $ R_{ijpq}^2 = ( R_{ipjq} - R_{iqjp} )^2 = R_{ipjq}^2 + R_{iqjp}^2 - 2 \, R_{ipjq} \, R_{iqjp} $); 

\medskip

\begin{center}
\scalebox{0.9}{ $ \begin{array}{ll}
 \sum \limits_{\alpha=1}^k \, \sum \limits_{\mu = k+1}^{m+1} & \displaystyle \int_{S^k} g_p(R_p(\Theta,\Theta_\alpha) \Theta, E_\mu) ^2 \, d\sigma \\[5mm] 
 & = \sum \limits_{i = 1}^{k+1} \, \sum \limits_{\mu = k+1}^{m+1} \displaystyle \int_{S^k} g_p (R_p(\Theta, E_i)\Theta,E_\mu)^2 \, d \sigma \\[5mm]
 & = \dfrac{1}{(k+1)(k+3)} \, \mathrm{Vol}(S^k) \left( \left\| \mathcal Ric_{k+1}^\bot(\Pi_p)  \right\|^2 + \dfrac{3}{2} \left\| R_{k+1}^\bot(\Pi_p) \right\|^2 \right);
\end{array} $ }
\end{center}

\medskip

\begin{center}
\scalebox{0.9}{ $ \begin{array}{ll}
 \displaystyle \int_{S^k} \nabla_\Theta^2 \, \mathcal Ric_{k+1}(\Theta,\Theta) \, d \sigma & = \dfrac{1}{(k+1)(k+3)} \sum \limits_{i,j=1}^{k+1} \Big( 
 \nabla^2_{E_i} \, \mathcal Ric_{k+1}(\Pi_p)(E_j,E_j) \\[5mm] 
 & + 2 \, \nabla_{E_i} \nabla_{E_j} \, \mathcal Ric_{k+1}(E_i,E_j) \Big) \\[5mm]  
 & = \dfrac{2}{(k+1)(k+3)} \, \mathrm{Vol}(S^k) \, \Delta_{k+1}^g \mathcal R_{k+1}(\Pi_p);
\end{array} $ }
\end{center}

\medskip

\begin{center}
\scalebox{0.9}{ $ \begin{array}{ll}
- \sum \limits_{\mu=k+1}^{m+1} \displaystyle \int_B^{k+1} | \nabla_{B^{k+1}} W^\mu |^2 \, dy & = \sum \limits_{\mu=k+1}^{m+1} \displaystyle \int_{B^{k+1}} W^\mu \, \Delta_{B^{k+1}} W^\mu 
 \, dy \\[5mm]
 & = - \dfrac{2 \, \e^4}{9} \, \dfrac{1}{k+3} \sum \limits_{\mu=k+1}^{m+1} \displaystyle \int_{B^{k+1}} \sum \limits_{i,j=1}^{k+1} R_{iji\mu}^2 \, (y^j)^2 \, (1 - |y|^2) \, dy \\[5mm]
 & = - \dfrac{2 \, \e^4}{9} \, \dfrac{1}{(k+3)(k+1)} \, \mathrm{Vol}(S^k) \, \| \mathcal Ric_{k+1}^\perp \|^2 \left( \frac{1}{k+3} - \frac{1}{k+5} \right) \\[5mm]
 & = - \dfrac{\e^4}{9} \, \dfrac{4}{(k+1)(k+3)^2(k+5)} \mathrm{Vol}(S^k) \, \| \mathcal Ric_{k+1}^\perp \|^2;
\end{array} $ }
\end{center}

\medskip

\begin{center}
\scalebox{0.9}{ $ \begin{array}{ll}
 \sum \limits_{\mu=1}^{k+1} \displaystyle \int_{B^{k+1}} W^\mu \sum \limits_{i,p=1}^{k+1} R_{ipi\mu} \, y^p \, dy & = - \dfrac{\e^2}{3} \, \dfrac{1}{(k+3)} \,\displaystyle \int_{B^{k+1}} R^2_{iji\mu} \, (y^j)^2( 1 - |y|^2 ) \, dy \\[3mm]
  & = - \dfrac{\e^2}{3} \, \dfrac{2}{(k+1)(k+3)^2(k+5)} \mathrm{Vol}(S^k) \, \| \mathcal Ric_{k+1}^\perp \|^2;
\end{array} $ }
\end{center}

and

\begin{center}
\scalebox{0.9}{ $ \begin{array}{ll}
 \sum \limits_{\mu=k+1}^{m+1} & \displaystyle \int_{B^{k+1}} \partial_{y^i} \, W^\mu R_{piq\mu} \, y^p \, y^q \, dy \\[5mm]
 & = - \dfrac{\e^2}{3} \, \dfrac{1}{(k+3)} \sum \limits_{\mu=k+1}^{m+1} \displaystyle \int_{B^{k+1}} \sum \limits_{i, p, q=1}^{k+1} \Big( \mathcal Ric(\Pi_p)_{i \mu}^\bot \, R_{piq\mu} \, y^p \, y^q \, (1 - |y|^2) \\[5mm] 
 & - 2 \, \sum \limits_{j=1}^{k+1} \mathcal Ric(\Pi_p)_{j\mu}^\bot \, R_{piq\mu} \, y^j \, y^i \, y^p \, y^q  \Big) \, dy \\[5mm]
 & = \dfrac{\e^2}{3} \, \dfrac{2}{(k+1)(k+3)^2(k+5)} \mathrm{Vol}(S^k) \Big[ - \| \mathcal Ric_{k+1}^\perp \|^2 \\[5mm] 
 & + \sum \limits_{p,q = 1}^{k+1} \sum \limits_{\mu=k+1}^{m+1} \left( \mathcal Ric(\Pi_p)_{p \mu}^\bot \, R_{q p q \mu } + \mathcal Ric(\Pi_p)_{q\mu}^\bot \, R_{p p  q \mu} + \mathcal Ric(\Pi_p)_{q\mu}^\bot \, R_{ q p p \mu} \right) \Big] \\[5mm]
 & = -\dfrac{2 \,\e^2}{3} \, \dfrac{1}{(k+1)(k+3)^2(k+5)} \, \mathrm{Vol}(S^k) \, \| \mathcal Ric_{k+1}^\perp \|^2. 
\end{array} $ }
\end{center}

This gives finally

\begin{center}
\scalebox{0.9}{ $ \begin{array}{ll}
  \dfrac{ (k+1) \, \mathcal E(\Pi_p)}{ \e^k \, \mathrm{Vol}(S^k)} & =  1 - \dfrac{\e^2}{2} \dfrac{1}{k+3} \, \mathcal R_{k+1}(\Pi_p) \\[5mm]
 & + \dfrac{\e^4}{72} \, \dfrac{1}{(k+3)(k+5)} \Big( 8 \, \| \mathcal Ric_{k+1}(\Pi_p) \|^2 - 18 \, \, \Delta_{k+1}^g \mathcal R_{k+1}(\Pi_p) - 3 \, \| R_{k+1}(\Pi_p) \|^2 \\[5mm] 
 & + 5 \, \mathcal R_{k+1}(\Pi_p)^2  + 8 \, \| \mathcal Ric_{k+1}^\perp(\Pi_p) \|^2 + 12 \, \| R^\perp_{k+1}(\Pi_p) \|^2 \Big) \\[5mm]
 & + \dfrac{\e^4}{18} \Big( \dfrac{2}{k(k+2)} \mathcal R_{k+1}^2(\Pi_p) - \dfrac{1}{(k+2)(k+3)} \mathcal R_{k+1}^2(\Pi_p) \\[5mm] 
 & - \dfrac{2}{(k+2)(k+3)} \| \mathcal Ric_{k+1}(\Pi_p) \|^2 - \dfrac{12k}{(k+3)^2(k+5)} \| \mathcal Ric_{k+1}^\bot \|^2 \Big) + \mathcal O(\e^5),
\end{array} $ }
\end{center}
or after simplification,
\begin{center}
\scalebox{0.9}{ $ \begin{array}{ll}
 \dfrac{(k+1) \, \mathcal E(\Pi_p)}{ \e^k \, \mathrm{Vol}(S^k)} & = 1 - \dfrac{\e^2}{2} \dfrac{1}{k+3} \, \mathcal R_{k+1}(\Pi_p) \\[5mm]
 & + \dfrac{\e^4}{72} \, \dfrac{1}{(k+3)(k+5)} \Big( 8 \, \| \mathcal Ric_{k+1}(\Pi_p) \|^2 - 18 \, \, \Delta_{k+1}^g \mathcal R_{k+1}(\Pi_p) - 3 \, \| R_{k+1}(\Pi_p) \|^2 \\[5mm] 
 & + 5 \, \mathcal R_{k+1}(\Pi_p)^2  + 8 \, \| \mathcal Ric_{k+1}^\perp(\Pi_p) \|^2 + 12 \, \| R^\perp_{k+1}(\Pi_p) \| \Big) \\[5mm]
 & + \dfrac{\e^4}{18} \, \dfrac{1}{(k+2)(k+3)} \Big( \frac{k+6}{k} \, \mathcal R_{k+1}^2(\Pi_p) - \| \mathcal Ric_{k+1}(\Pi_p) \|^2 \\[5mm] 
 & - \dfrac{12 \, k(k+2)}{(k+3)(k+5)} \, \| \mathcal Ric_{k+1}^\perp(\Pi_p) \|^2 \Big) + \mathcal O(\e^5) \\[5mm]
 & = 1 - \dfrac{\e^2}{2} \dfrac{1}{k+3} \, \mathcal R_{k+1}(\Pi_p) + \dfrac{\e^4}{2(k+3)} \, \mathbf r(\Pi_p) + \mathcal O(\e^5).
\end{array} $ }
\end{center}

\section{Problems}

The results above produce a collection of $k$-dimensional spheres. It is reasonable to suspect that there are other compact 
$k$-dimensional embedded constant mean curvature submanifolds in $\mathbb R^n$ ? Find some other examples! 

\medskip

\noindent Is it possible to build noncompact $k$-dimensional (nonzero) constant mean curvature submanifolds which are 
not contained in a $(k+1)$-dimensional subspace ?  For zero mean curvature submanifolds, the half plane, a half helicoid 
(observe that there are two ways to cut the helicoid in half along a straight line) which has boundary a straight line, 
and a fundamental piece of a Riemann surface, whose boundary are $2$ parallel straight lines, are nontrivial examples.
Are there any analogues in this setting? 

\medskip

\noindent It should follow by unique continuation that if  $K = \partial Q = \partial Q'$ is a constant mean curvature submanifold,
with $H_K \neq 0$, then $Q = Q'$. When $Q$ is a hypersurface,  so $K$ has codimension $2$, this is true by the Hopf boundary
maximum principle.


\begin{thebibliography}{99}

\bibitem{A} F. Almgren, {\em Optimal Isoperimetric inequalities}, Bulletin of the AMS, Vol. 13,  2, (1985),  123-126.

\bibitem{DHL} U. Dierkes and S. Hildebrandt, and H. Lewy, {\em  On the analyticity of minimal surfaces at movable boundaries of prescribed length}. J. Reine Angew. Math. 379 (1987), 100-114. 

\bibitem{H} D. Hoffman, {\em Surfaces in constant curvature manifolds with parallel mean curvature vector field}. Bull. Am. Math. Soc. 78, 247 (1972).

\bibitem{L} H. B. Lawson, Jr., { \em Lectures on minimal submanifolds }, Berkeley CA. {1980}

\bibitem{Mah-Maz-Pac} F. Mahmoudi, R. Mazzeo and F. Pacard. Constant mean curvature hypersurfaces condensing along a submanifold.  Geom. Funct. Anal. 16, no 4, (2006), 924-958

\bibitem{Pac-Xu} F. Pacard et X. Xu. {\em Constant mean curvature spheres in Riemannian manifolds}. Manuscripta Math., 128 (3), 275-295, (2009)

\bibitem{Sch-Yau} R. Schoen, S.T. Yau, { \em Lectures on Differential Geometry }, International Press (1994)

\bibitem{W} T.J. Willmore, {\em Riemannian geometry}. Clarendon Press (1997).

\bibitem{Ye} R. Ye, {\em Foliation by constant mean curvature spheres}, Pacific J. Math. 147 (1991), no. 2, 381Ð396.

\end{thebibliography}
\end{document}